\newtheorem{theorem}{Theorem}
\newtheorem{corollary}[theorem]{Corollary}
\newtheorem{definition}[theorem]{Definition}
\newtheorem{quest}[theorem]{Question}
\begin{document}
\title{Derivations in the Banach ideals of $\tau$-compact operators}

\author{A. F. Ber}
\address{Department of Mathematics, Tashkent State University, Uzbekistan }
\email{ber@ucd.uz}

\author{F. A. Sukochev}
\address{School of Mathematics and Statistics, University of New South Wales, Sydney, NSW 2052, Australia }
\email{f.sukochev@unsw.edu.au}

\maketitle

\bigskip
\begin{abstract}
\noindent Let $\mathcal{M}$ be a von Neumann algebra equipped with
a faithful normal semi-finite trace $\tau$ and let $S_0(\tau)$ be
the algebra of all $\tau$-compact operators affiliated with
$\mathcal{M}$. Let $E(\tau)\subseteq S_0(\tau)$ be a symmetric
operator space (on $\mathcal{M}$) and let $\mathcal{E}$ be a
symmetrically-normed Banach ideal of $\tau$-compact operators in
$\mathcal{M}$. We study (i) derivations $\delta$ on $\mathcal{M}$
with the range in $E(\tau)$  and (ii) derivations on the Banach
algebra $\mathcal{E}$. In the first case our main results assert
that such derivations are continuous (with respect to the norm
topologies) and also inner (under some mild assumptions on
$E(\tau)$). In the second case we show that any such derivation is
necessarily inner when $\mathcal{M}$ is a type $I$ factor. As an
interesting application of our results for the case (i) we deduce
 that any derivation from $\mathcal{M}$ into an $L_p$-space,
$L_p(\mathcal{M},\tau)$, ($1<p<\infty$) associated with
$\mathcal{M}$ is inner.
\end{abstract}

\section{Introduction}
It is well known \cite[Lemma 4.1.3]{Sak} that every derivation on
a $C^*$-algebra $A$ is norm continuous. In fact, this also easily
follows from another well known fact \cite[Corollary 4.1.7]{Sak}
that every derivation on $A$ realized as a $*$-subalgebra in the
algebra $\mathcal{B}(H)$ of all bounded linear operators on a
Hilbert space $H$ is given by a reduction of an inner derivation
on a von Neumann algebra $\mathcal{M}=\overline{A}^{wo}$ (the
closure of $A$ in the weak operator topology on $\mathcal{B}(H)$).
In the special setting when $A=\mathcal{K}(H)$ (the ideal of all
compact operators on $H$) and $\mathcal{M}=\mathcal{B}(H)$, the
latter result states that for every derivation $\delta$ on $A$
there exists an operator $a\in\mathcal{B}(H)$ such that
$\delta(x)=[a,x]$ for every $x\in\mathcal{K}(H)$. The ideal
$\mathcal{K}(H)$ is a classical example of a so-called
symmetrically-normed operator ideal in $\mathcal{B}(H)$ (see
\cite{GK1, GK2, Schatten, Simon, KS}). Any such ideal
$\mathcal{E}\neq \mathcal{K}(H)$ is a Banach $*$-algebra (albeit
not a $C^*$-algebra) and two natural questions immediately
suggested by this discussion are as follows.

\begin{quest}\label{q1} Let $\mathcal{E}\subsetneqq \mathcal{K}(H)$ be a symmetrically-normed ideal of compact operators on $H$  and let $\delta\colon \mathcal{E}\to \mathcal{E}$ be a derivation on $\mathcal{E}$. Is $\delta$ continuous with respect to a symmetric (Banach) norm $\|\cdot\|_{\mathcal{E}}$ on $\mathcal{E}$?
\end{quest}

\begin{quest}\label{q2}  Does there exist an operator $a\in\mathcal{B}(H)$ such that $\delta(x)=[a,x]$ for every $x\in \mathcal{E}$?
\end{quest}

Of course, a positive answer to Question \ref{q2} implies also a
positive answer to Question \ref{q1}. However, in this paper, we
are establishing a positive answer to Question \ref{q2} via
firstly answering Question \ref{q1} in positive. In fact, we
consider a much more general situation when $\mathcal{B}(H)$ is
replaced with an arbitrary von Neumann algebra $\mathcal{M}$ and
an arbitrary symmetrically-normed ideal $\mathcal{E}$ in
$\mathcal{M}$ is replaced with a Banach $\mathcal{M}$-bimodules of
$\tau$-compact operators affiliated with $\mathcal{M}$ (here
$\tau$ is a faithful normal semifinite trace on  $\mathcal{M}$).
In particular, we study in depth the situation when such a
bimodule is a symmetric space of $\tau$-measurable operators  (see
\cite{Ovchinnikov2, SC, KS}) which is an extension of the
classical study of symmetrically-normed ideals of compact
operators. In the setting of $\tau$-measurable operators,
we also contribute to some open questions in the literature as follows.
Let us consider the classical example of the algebras $S(0,1)$ (respectively, $L_\infty(0,1)$) of
of all (classes) of Lebesgue (respectively, essentially bounded) measurable functions on the interval
$[0,1]$. Denote $D(0,1)$ the subset of $S(0,1)$, consisting of
all classes containing a function having a finite derivative (that
is $\frac {d}{dt}$)almost everywhere. Obviously, $D(0,1)$ is a
subalgebra in $S(0,1)$ and we have a correctly defined linear
mapping $\delta : D(0,1) \mapsto S(0,1)$, given by
$\delta(f)(t)=f'(t)$ for all $t \in (0,1)$, for which the finite
derivative $f'$ of $f$ exists. It is easy to verify
that $\delta$ is a derivation, which we still denote $\frac
{d}{dt}$. Motivated by this observation, Sh.Ayupov (see \cite{A1,A2}) suggested to study derivations in the setting of algebras of
of $\tau$-measurable operators and came up with the following explicit question.

\begin{quest}\label{q3} Is any derivation $\delta$ on the algebra of all $\tau$-
measurable operators affiliated with an arbitrary von Neumann
algebra $\mathcal{M}$ necessarily inner?
\end{quest}

In the setting when $\mathcal{M}$ is commutative, e.g. when
$\mathcal{M}=L_\infty(0,1)$, in view of the fact that every inner
derivation on $S(0,1)$ vanishes, this question asks whether
any non-trivial derivation on such an algebra exists.
This question, in the setting of the algebra $S(0,1)$, was settled in the affirmative in \cite{BCS1} where it is established that there exists a multitude of non-trivial derivations from $L_\infty(0,1)$ into $S(0,1)$ extending $\frac{d}{dt}$. Our contribution in this paper shows that the range of every such non-trivial derivation $\delta$ on $L_\infty(0,1)$ is not contained in any Banach $L_\infty(0,1)$-bimodule in the algebra  $S(0,1)$ (see Corollary \ref{bcom}).
\section{Preliminaries}

For details on the von Neumann algebra theory, the reader is referred
to e.g. \cite{Dix, Sak, Tak, SZ, KR2}.


\subsection{The algebra of $\tau$-measurable operators}
Let $\mathcal{M}$ be a von Neumann algebra on a Hilbert space $H$
equipped with a semi-finite normal faithful trace $\tau$. The set
of all self-adjoint projections in $\mathcal{M}$ is denoted by
$\mathcal{P}\left(\mathcal{M}\right)$. The commutant of $\mathcal{M}$ is denoted by $\mathcal{M}^{\prime}$.
A linear operator $x:\mathfrak{D}\left( x\right) \rightarrow
H $, where the domain $\mathfrak{D}\left( x\right) $ of $x$ is a linear
subspace of $H$, is said to be \emph{affiliated} with $\mathcal{M}$ (notation $x\eta \mathcal{M}$) if $yx\subseteq
xy$ for all $y\in \mathcal{M}^{\prime }$. For every self-adjoint operator $x\eta\mathcal{M}$, we have $e^x\subset \mathcal{P}\left(\mathcal{M}\right)$ (here $e^{ x }$ denotes the spectral measure of $x$).
Furthermore, if $x$ is a closed operator in $H$ with the polar decomposition $x = u|x|$
and $x\eta \mathcal{M}$, then $u\in\mathcal{M}$ and $|x|\eta \mathcal{M}$





Recall that a closed and densely
defined linear operator $x$ on ${H}$ is called $\tau $\emph{
-measurable} if $x$ is affiliated with $\mathcal{M}$  and there exists $0<\lambda\in \mathbb{R}$ such that $\tau \left(
e^{\left\vert x\right\vert }\left( \lambda,\infty \right) \right) <\infty $. The collection of all $\tau $-measurable operators is denoted by
$S\left( \tau \right) $. With respect to the strong sum and product, $S\left( \tau \right) $ is a $\ast $-algebra.

\subsection{Generalized singular value function and $\tau$-compact operators} For any $x\in S\left( \tau \right) $ its \emph{generalized singular value
function} $\mu \left( x\right) :\left[ 0,\infty \right) \rightarrow \left[
0,\infty \right] $ is defined by setting
\begin{equation*}
\mu \left( t;x\right) =\inf \left\{ s\geq 0:\tau \left( e^{\left\vert
x\right\vert }\left( s,\infty \right) \right) \leq t\right\} ,\ \ \ t\geq 0.
\end{equation*}
Note that $\mu \left( t;x\right) <\infty $ for all $t>0$ and that is
decreasing and right continuous. The set $S_{0}\left( \tau \right) $ of all $\tau $\emph{-compact operators} is defined by
\begin{equation*}
S_{0}\left( \tau \right) =\left\{ x\in S\left( \tau \right)
:\lim\nolimits_{t\rightarrow \infty }\mu \left( t;x\right) =0\right\} .
\end{equation*}
The set $S_{0}\left( \tau \right) $ is a $\ast $-closed two sided ideal in $S\left( \tau \right) $, which is closed for the measure topology \cite{SW}.
Consider $\mathcal{M}=L^\infty([0,\infty))$ as an Abelian von Neumann algebra acting via multiplication on the Hilbert space $H=L^2(0,\infty)$, with the trace given by integration with respect to Lebesgue measure $m$. It is easy to see that the set of all $\tau$-measurable operators affiliated with $\mathcal{M}$ coincides with the set $S(0,\infty)$ of all measurable function on $[0,\infty)$ which are bounded expect on a set of finite measure, and that the generalized singular value function $\mu(x)$, $x\in S(0,\infty)$ is precisely the non-increasing right-continuous rearrangement of $x$ given by $$x^*(t)=\inf\bigl\{ s\geqslant 0: m\left(\{|x|\geqslant s\}\right)\leqslant t\bigl\}$$(see e.g. \cite{KPS, DPS}). In this setting, the algebra $S_{0}\left( \tau \right) $ coincides with the set $S_0(0,\infty)$ of all $f\in S(0,\infty)$ such that $f^*(\infty):=\lim_{t\to \infty}f^*(t)=0$.

If $\mathcal{M}=\mathcal{B}(H)$ (respectively, $l_\infty(\mathbb{N}))$ and $\tau$ is the standard trace $Tr$ (respectively, the counting measure on $\mathbb{N})$, then it is not difficult to see that $S(\tau)=\mathcal{M}$. In the first case, for $x\in S(\tau)$ by setting $$s(x):=\{s_n(x)\}_{n=1}^\infty,\quad s_n(x)=\mu(t;x), t\in(n-1,n], n=0,1,2,\dots,$$  we recover  the sequence $s(x)$ of singular values of $x$ (see e.g. \cite{GK1, GK2, Simon}). In this case, $S_{0}\left( \tau \right) $ is the set $\mathcal{K}(H)$ of all compact operators on $H$. In the second case,  when $x$ is a sequence we denote by $x^*$ the usual decreasing rearrangement of the sequence $|x|$ and identify $\mu(x)$ with $x^*$. The algebra $S_{0}\left( \tau \right) $ then coincides with the set $c_0$ of all vanishing sequences. For more details on the theory of $\tau$-compact operators we refer the reader to \cite{DPS, Ovchinnikov3, Sonis, SW}.

\subsection{Banach $\mathcal{M}$-bimodules}
As before, we assume that $\left( \mathcal{M},\tau \right) $ is a
semi-finite von Neumann algebra. We recall the following definition.

\begin{definition}\label{bimodule}
A linear subspace $\mathbb E$ of $S(\tau)$, equipped with
a norm $\| \cdot \| _{\mathbb E}$, is called a \emph{Banach $\mathcal{M}$-bimodule (of }$\tau $\emph{-measurable operators)} if

\begin{enumerate}
\item[(i)] if $uxv\in \mathbb E$ and $\left\Vert uxv\right\Vert _{\mathbb E}\leq \left\Vert
u\right\Vert _{\mathcal{B}\left( {H}\right) }\left\Vert v\right\Vert _{\mathcal{B}\left(
{H}\right) }\left\Vert x\right\Vert _{\mathbb E}$ whenever $x\in \mathbb E$ and $
u,v\in \mathcal{M}$;

\item[(ii)] $\left( \mathbb E,\left\Vert \cdot \right\Vert _{\mathbb E}\right) $ is a Banach
space.
\end{enumerate}
\end{definition}

It should be observed that any Banach $\mathcal{M}$-bimodule $\mathbb E$ is
$\ast $-closed and that $x\in S\left( \tau \right) $, $y\in \mathbb E$ and
$\left\vert x\right\vert \leq \left\vert y\right\vert $ imply that
$x\in \mathbb E$ and $\left\Vert x\right\Vert _{\mathbb E}\leq \left\Vert
y\right\Vert _{\mathbb E}$ (see \cite{DPS}).


We now recall two well known special cases of Definition \ref{bimodule}.

\begin{definition}\label{opideal}
An ideal $\mathcal E$ in the von Neumann algebra
$\mathcal M$ equipped with a Banach norm $\|\cdot\|_{\mathcal E}$ is said
to be \emph{a symmetrically-normed operator ideal} if
 $$\| ASB\|_{\mathcal E}\leq \| A\| \:\| S\|_{\mathcal E}\| B\|\mbox{ for all }\
S\in \mathcal E,\ A,B\in \mathcal M.$$
\end{definition}


An important and interesting class of Banach $\mathcal{M}$-bimodules is
that of symmetric spaces of $\tau $-measurable operators on $\mathcal{M}$.

\begin{definition}\label{symopspace} Let ${E}(\tau)$  be a linear subspace in $S(\tau)$ equipped with a norm $\|\cdot\|_{{E}(\tau)}$. We say that ${E}(\tau)$ is a \emph{symmetric operator space} (on $\mathcal{M}$, or in $S(\tau))$ if for any $x\in{E}(\tau)$ and every $y\in S(\tau)$ such that $\mu(y)\leqslant\mu(x)$, we have $y\in{E}(\tau)$ and $\|y\|_{{E}(\tau)}\leqslant\|x\|_{{E}(\tau)}$.
\end{definition}
%
%
In the special case, when $\mathcal{M}=\mathcal{B}(H)$ and $\tau$ is a standard trace $Tr$, the class of symmetric operator spaces introduced in Definition \ref{symopspace} coincides with that of  symmetrically-normed operator ideals given in Definition \ref{opideal}.

Let $L_1(0,1)$ (respectively, $L_1(0,\infty))$ be the space of all integrable functions on $(0,1)$ (respectively, on $(0,\infty)$), $L_\infty(0,1)$ (respectively, $L_\infty(0,\infty))$ be the space of all essentially bounded measurable functions on $(0,1)$ (respectively, on $(0,\infty)$), $l_1$ and $l_\infty$ are the classical spaces of all absolutely summable and bounded sequences respectively.

Let $E$ be a Banach space of real-valued Lebesgue measurable function either on $(0,1)$ or $(0,\infty)$ (with identification $m$-a.e.) or on $\mathbb{N}$. The space $E$ is said to be \textit{absolutely solid} if $x\in E$ and $|y|\leqslant|x|,\ y\in S(0,\infty)$ implies that $y\in E$ and $\|y\|_E\leqslant\|x\|_E$.

The absolutely solid space $E\subset S$ is said to be \textit{symmetric} if for every $x\in E$ and every $y$ the assumption $y^*=x^*$ implies that $y\in E$ and $\|y\|_E=\|x\|_E$ (see e.g. \cite{KPS}).
If $E=E(0,1)$ is a symmetric space on $(0,1)$, then $$L_\infty(0,1)\subseteq E\subseteq L_1(0,1).$$
If $E=E(0,\infty)$ is a symmetric space on $(0,\infty)$, then $$L_1(0,\infty)\cap L_\infty(0,\infty)\subseteq E\subseteq L_1(0,\infty)+L_\infty(0,\infty).$$
If $E=E(\mathbb{N})$ is a symmetric space on $\mathbb{N}$, then $$l_1\subseteq E\subseteq l_\infty.$$
There exists a strong connection between symmetric function and operator spaces, which extends classical Calkin correspondence between two-sided ideals of compact operators and Calkin subspaces in $l_\infty$ (see \cite{Calkin}, or B.~Simon's book, \cite[Theorem 2.5]{Simon}). For brevity, we assume that $\tau(\mathbf{1})=1$ or else $\tau(\mathbf{1})=\infty$ (here,  $\mathbf{1}$ is the identity in $\mathcal{M}$).


\begin{theorem}\cite[Theorem 8.11]{KS}\label{ks} (i) For every symmetrically-normed ideal $E$ in $l_{\infty},$ the set
$$\mathcal{E}:=\{x\in B(H):\ s(x)\in E\},\quad \|x\|_{\mathcal{E}}:=\|s(x)\|_E$$
is a symmetrically-normed ideal in $B(H).$

(ii) For every semifinite von Neumann algebra $\mathcal{M}$ with a faithful normal semifinite trace $\tau$ and every symmetric space $E=E(0,\tau(\mathbf{1})))$, the set
$$E(\tau):=\{x\in S(\tau):\mu(x)\in E\}, \quad\|x\|_{E(\tau)}:=\|\mu(x)\|_E$$
is a symmetric operator space on $\mathcal{M}$.
\end{theorem}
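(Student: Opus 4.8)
The plan is to verify each prescription directly against the relevant definition, relying throughout on the standard calculus of the generalized singular value function $\mu(\cdot)$ (and, in the sequence case, of the singular value sequence $s(\cdot)$). The three properties I expect to use are: (a) homogeneity and rearrangement invariance, $\mu(\alpha x)=|\alpha|\,\mu(x)$ and $\mu(x)=\mu(|x|)=\mu(x^{*})$; (b) the pointwise two-sided domination $\mu(t;axb)\le\|a\|_{\mathcal{B}(H)}\,\|b\|_{\mathcal{B}(H)}\,\mu(t;x)$ for $a,b\in\mathcal{M}$, $x\in S(\tau)$; and (c) the Fack--Kosaki submajorization $\int_{0}^{t}\mu(s;x+y)\,ds\le\int_{0}^{t}\mu(s;x)\,ds+\int_{0}^{t}\mu(s;y)\,ds$ for all $t>0$, i.e.\ $\mu(x+y)\prec\prec\mu(x)+\mu(y)$. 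The analogues of (a)--(c) for singular value sequences, with Ky Fan's inequality in the role of (c), are what I would use for part (i).

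For part (ii) I would first check that $E(\tau)$ is a linear subspace of $S(\tau)$ and that $\|\cdot\|_{E(\tau)}$ is a norm. Scalar homogeneity and the implication $\|x\|_{E(\tau)}=0\Rightarrow\mu(x)=0\Rightarrow x=0$ are immediate from (a) and faithfulness. The defining monotonicity of Definition \ref{symopspace} is equally direct: if $\mu(y)\le\mu(x)$ with $x\in E(\tau)$, then since $E$ is absolutely solid one gets $\mu(y)\in E$ and $\|\mu(y)\|_{E}\le\|\mu(x)\|_{E}$, that is $y\in E(\tau)$ and $\|y\|_{E(\tau)}\le\|x\|_{E(\tau)}$. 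The one genuinely non-formal point is the triangle inequality: using (c) I would reduce $\|x+y\|_{E(\tau)}=\|\mu(x+y)\|_{E}$ first to $\|\mu(x)+\mu(y)\|_{E}$, and then bound the latter by $\|\mu(x)\|_{E}+\|\mu(y)\|_{E}$. The first of these two steps is exactly where I invoke that the norm of a symmetric function space is monotone with respect to Hardy--Littlewood--P\'olya submajorization, a standard property of symmetric Banach function spaces; this simultaneously yields closure of $E(\tau)$ under addition.

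There remains completeness. Here I would use that the $E(\tau)$-norm dominates the measure topology on $S(\tau)$: since $\mu(\cdot;x)$ is decreasing, $\mu(x)\ge\mu(t_{0};x)\,\mathbf{1}_{(0,t_{0})}$ pointwise, so solidity gives $\mu(t_{0};x)\le\|\mu(x)\|_{E}/\|\mathbf{1}_{(0,t_{0})}\|_{E}$ for each fixed $t_{0}>0$ (note $\mathbf{1}_{(0,t_{0})}\in E$ since $E$ contains $L_{1}\cap L_{\infty}$). Hence an $\|\cdot\|_{E(\tau)}$-Cauchy sequence is Cauchy in measure and converges to some $x\in S(\tau)$ by completeness of $S(\tau)$ for the measure topology; a Fatou-type passage to the limit inside the solid, submajorization-monotone norm then shows $x\in E(\tau)$ and that the convergence takes place in $E(\tau)$, so $(E(\tau),\|\cdot\|_{E(\tau)})$ is Banach.

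Part (i) is the special case $\mathcal{M}=\mathcal{B}(H)$, $\tau=\mathrm{Tr}$, with $\mu$ replaced by the singular value sequence $s(\cdot)$ and $E$ a symmetric sequence space, and I would argue in the same fashion. The ideal inclusions and the estimate $\|ASB\|_{\mathcal{E}}\le\|A\|\,\|S\|_{\mathcal{E}}\,\|B\|$ of Definition \ref{opideal} follow from the pointwise domination $s_{n}(ASB)\le\|A\|\,\|B\|\,s_{n}(S)$ together with solidity of $E$; that $\mathcal{E}$ is a normed linear space again rests on submajorization monotonicity via $s(S+T)\prec\prec s(S)+s(T)$, and completeness is obtained exactly as above. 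I expect the submajorization step in the triangle inequality, and the Fatou-type passage to the limit in the completeness argument, to be the main obstacle, the remainder being a routine transcription of the properties of $\mu$ and $s$ into the solidity and rearrangement invariance of $E$.
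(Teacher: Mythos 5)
First, note that the paper offers no proof of this statement: it is quoted verbatim from \cite[Theorem 8.11]{KS}, so there is no internal argument to compare yours against. Judged on its own merits, your proposal has a genuine gap at exactly the point you single out as ``the main obstacle''. You derive the triangle inequality from the claim that the norm of a symmetric Banach function (or sequence) space is monotone with respect to Hardy--Littlewood--P\'olya submajorization. With the definition of symmetric space used here (absolutely solid plus rearrangement invariant), that claim is false in general: by the Calder\'on--Mityagin theorem, $\prec\prec$-monotonicity of the norm is equivalent to $E$ being an exact interpolation space for the couple $(L_1,L_\infty)$, i.e.\ to $E$ being \emph{fully} symmetric, and there are classical examples (Russu, Sedaev) of symmetric spaces that are not interpolation spaces for this couple. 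Your argument therefore establishes the theorem only for fully symmetric $E$ --- which covers $L_p$, Lorentz, Orlicz and Marcinkiewicz spaces and everything with the Fatou property, and is the version available in the older literature (Ovchinnikov, Chilin--Sukochev, Dodds--Dodds--de Pagter) --- but not in the stated generality. Removing the full-symmetry hypothesis is precisely the point of \cite[Theorem 8.11]{KS}: the proof there replaces $\prec\prec$ by a strictly stronger relation (uniform submajorization), shows that every symmetric Banach norm is monotone with respect to that relation, and verifies that $\mu(x+y)$ is controlled in this stronger sense by $\mu(x)+\mu(y)$. That is a substantial piece of analysis, not a routine transcription of properties of $\mu$.

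The same issue affects your completeness argument: the ``Fatou-type passage to the limit inside the solid, submajorization-monotone norm'' presupposes both $\prec\prec$-monotonicity and an order-semicontinuity (Fatou) property of $\|\cdot\|_E$, neither of which is part of the hypothesis; this too must be circumvented in the general case. The remaining steps --- the bimodule estimate via $\mu(axb)\le\|a\|\,\|b\|\,\mu(x)$ and solidity, the reduction of part (i) to part (ii) with $s(\cdot)$ in place of $\mu(\cdot)$, and the domination of convergence in measure by the $E(\tau)$-norm --- are correct as written.
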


If $E=L_p, 1\leqslant p<\infty$, then $(E(\tau), \|\cdot\|_{E(\tau)})$ coincides with the classical non-commutative $L_p$-space associated with the algebra $(\mathcal{M},\tau)$. If $\mathcal{M}$ is a semi-finite atomless von Neumann algebra, then the converse results also holds \cite{SC}. That is, if $F$ is a symmetric operator space on $\mathcal{M}$, then
$$E(0,\infty):=\{f\in S(0,\infty): f^*=\mu(x) \mbox{ for some } x\in F\}, \quad\|f\|_E:=\|x\|_F$$
is a symmetric function space on $(0,\tau(\mathbf{1}))$. It is obvious that $F=E(\mathcal{M},\tau)$.

Similarly, every symmetric operator ideal $\mathcal{E}$ in $\mathcal{M}$ defines a unique symmetric sequences spaces $E=E(\mathbb{N})$ by setting
$$E:=\{a=(a_n)_{n=1}^\infty\in l_\infty: a^*=s(x) \mbox{ for some }x\in\mathcal{E}\},\quad \|a\|_E:=\|x\|_\mathcal{E}.$$

Let $A$ be a complex algebra and $\mathbb{E}$ let be a bimodule on $A$. Recall that a \textit{derivation} from $A$ into $\mathbb{E}$ is a linear map
$\delta :A\rightarrow \mathbb{E}$ such that
\begin{equation*}
\delta \left( xy\right) =\delta \left( x\right) y+x\delta \left(
y\right) ,\ \ \ x,y\in A.
\end{equation*}
If, in addition, $A$ is a $*$-algebra, $\mathbb{E}$ is a $*$-bimodule and the derivation $\delta$ satisfies $\delta(x^*)=(\delta(x))^*$ for all $x\in A$, then it is called a $*$-derivation.
For every derivation $\delta:A\to \mathbb{E}$ of a $*$-algebra $A$ into a $*$-bimodule $\mathbb{E}$, we define mappings  $\delta_\mathrm{Re}(x):=\frac{\delta(x)+\delta(x^*)^*}{2}$ and $\delta_\mathrm{Im}(x):=\frac{\delta(x)-\delta(x^*)^*}{2i}, x\in A$. It is easy to see that
$\delta_\mathrm{Re}$ and $\delta_\mathrm{Im}$ are $*$-derivations on $A$, moreover $\delta=\delta_\mathrm{Re}+i\delta_\mathrm{Im}$. If $a\in A$, then the mapping $\delta _{a}:A\rightarrow A$, given by
$\delta _{a}\left( x\right) =\left[ a,x\right] $, $x\in A$, is a
derivation. A derivation of this form is called \textit{inner}. Similarly, a derivation $\delta:A\to \mathbb{E}$ such that $\delta \left( x\right) =\left[ a,x\right] $, for all $x\in A$ and some $a\in \mathbb{E}$ is inner.


\section{Main Results}
Throughout this section $\mathcal{M}$ is a semifinite von Neumann
algebra equipped with a faithful normal semifinite trace $\tau$.

The following theorem is our first main result. It yields a
positive answer to Question \ref{q1} (see also Corollary \ref{SS}
below) in the general setting of $\tau$-compact ideals in von
Neumann algebras. It is convenient to isolate the key algebraic
property of such algebras which underlies our proof. Let $p,q\in
\mathcal{P}\left(\mathcal{M}\right)$. The projections $p$ and $q$
are said to be equivalent, if there exists a partial isometry
$v\in\mathcal{M}$, such that $v^*v=p,\ vv^*=q$. In this case, we
write $p\sim q$.

\begin{theorem}\label{th1}
Let  $(\mathcal{E},\|\cdot\|_{\mathcal{E}})$ be an ideal of
$\tau$-compact operators in $\mathcal{M}$ equipped with a Banach
norm $\|\cdot\|_{\mathcal{E}}$ so that
$(\mathcal{E},\|\cdot\|_{\mathcal{E}})$ is a Banach
$\mathcal{M}$-bimodule and let $\delta$ be a derivation on
$\mathcal{E}$. Then $\delta$ is a continuous mapping on
$(\mathcal{E}, \|\cdot\|_{\mathcal{E}})$.
\end{theorem}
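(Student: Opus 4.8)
The plan is to show that the \emph{separating space} of $\delta$ is trivial and then invoke the closed graph theorem. First I would reduce to the case of a $*$-derivation: since $\mathcal{E}$ is $*$-closed with isometric involution, the maps $\delta_{\mathrm{Re}},\delta_{\mathrm{Im}}$ introduced in Section~2 are $*$-derivations with $\delta=\delta_{\mathrm{Re}}+i\delta_{\mathrm{Im}}$, and $\delta$ is continuous if and only if both of them are; so we may assume $\delta=\delta^{*}$. I then introduce the separating space $\mathfrak{S}(\delta)=\{\,y\in\mathcal{E}:\exists\,x_{n}\in\mathcal{E},\ \|x_{n}\|_{\mathcal{E}}\to0,\ \|\delta(x_{n})-y\|_{\mathcal{E}}\to0\,\}$, which is a closed, $*$-invariant subspace of $\mathcal{E}$. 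By the closed graph theorem it suffices to prove $\mathfrak{S}(\delta)=\{0\}$.

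The second step records the structure I will exploit. For a fixed $a\in\mathcal{E}$ the left and right multiplication operators $L_{a},R_{a}$ are bounded on $(\mathcal{E},\|\cdot\|_{\mathcal{E}})$: when $\mathcal{E}\subseteq\mathcal{M}$ this is immediate from bimodule axiom (i) and the domination of the operator norm by $\|\cdot\|_{\mathcal{E}}$, and in general it follows from a closed–graph argument using the continuity of the embedding $\mathcal{E}\hookrightarrow S(\tau)$ into the measure topology together with the continuity of multiplication on $S(\tau)$. Feeding this into the identity $\delta(ax_{n})=a\,\delta(x_{n})+\delta(a)\,x_{n}$ and its right-hand analogue shows that $\mathfrak{S}(\delta)$ is a closed two-sided ideal; applying it with a partial isometry $v\in\mathcal{E}$ and its adjoint shows in addition that $\mathfrak{S}(\delta)$ is invariant under the transports $x\mapsto vxv^{*}$. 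Here the key algebraic property enters: if a projection $e$ lies in $\mathcal{E}$ and $f\sim e$ via $v$, then $v=ev=ve\in\mathcal{E}$ and $f=vev^{*}\in\mathcal{E}$, so $\mathcal{E}$, and with it $\mathfrak{S}(\delta)$, is stable under equivalence of its projections. Since every nonzero element of $\mathcal{E}\subseteq S_{0}(\tau)$ dominates a nonzero finite-trace spectral projection lying in $\mathcal{E}$, either $\mathcal{E}$ is finite-dimensional (so every linear map is continuous) or, when $\mathcal{M}$ carries an infinite central summand, there is an infinite family $\{e_{n}\}\subseteq\mathcal{E}$ of mutually orthogonal, mutually equivalent nonzero finite projections with implementing partial isometries $v_{n}\in\mathcal{E}$.

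Assume now $\mathfrak{S}(\delta)\neq\{0\}$ and choose $0\neq y=y^{*}\in\mathfrak{S}(\delta)$; using a finite spectral projection $e\in\mathcal{E}$ I replace $y$ by $eye\in\mathfrak{S}(\delta)$ so as to assume $y=eye$ with $\|y\|_{\mathcal{E}}=:c>0$. Transporting, $y_{n}:=v_{n}y v_{n}^{*}\in\mathfrak{S}(\delta)$ lives in the orthogonal corner $e_{n}\mathcal{E}e_{n}$ with $\|y_{n}\|_{\mathcal{E}}=c$. For each fixed $n$, since $\delta(e_{n})$ is a single fixed element, I can localise an arbitrary witness of $y_{n}\in\mathfrak{S}(\delta)$ by the contractions $x\mapsto e_{n}xe_{n}$ (the cross-terms involving $\delta(e_{n})$ vanish in the limit because the witness tends to $0$), obtaining $w_{n}=e_{n}w_{n}e_{n}$ with $\|w_{n}\|_{\mathcal{E}}\le 2^{-n}$ and $\|\delta(w_{n})-y_{n}\|_{\mathcal{E}}\le 2^{-n}$. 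Set $w:=\sum_{n}w_{n}\in\mathcal{E}$. Writing $w=w_{n}+w_{n}'$ with $w_{n}'=w-w_{n}$, continuity of multiplication and orthogonality give $e_{n}w_{n}'=w_{n}'e_{n}=0$, whence $e_{n}\delta(w_{n}')e_{n}=-\delta(e_{n})w_{n}'e_{n}=0$; therefore $e_{n}\delta(w)e_{n}=e_{n}\delta(w_{n})e_{n}$ and $\|e_{n}\delta(w)e_{n}\|_{\mathcal{E}}\ge\|y_{n}\|_{\mathcal{E}}-2^{-n}\ge c/2$ for all large $n$.

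Thus $z:=\delta(w)\in\mathcal{E}\subseteq S_{0}(\tau)$ would satisfy $\|e_{n}ze_{n}\|_{\mathcal{E}}\ge c/2$ on infinitely many mutually orthogonal equivalent corners, while the pinchings $\sum_{n\le N}e_{n}ze_{n}$ are norm-dominated by $\|z\|_{\mathcal{E}}$. This is incompatible with $\tau$-compactness, yielding $\mathfrak{S}(\delta)=\{0\}$ and hence the continuity of $\delta$. I expect the main obstacle to be precisely this last incompatibility: quantifying how $\tau$-compactness of the range forbids an element from having uniformly large pieces on infinitely many orthogonal equivalent corners, i.e.\ a norm estimate reflecting the absence of an $\ell_{\infty}$/$L_{\infty}$ ``tail'' in $\mathcal{E}\subseteq S_{0}(\tau)$ which forces the partial-sum norms to diverge. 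A secondary difficulty is the reduction, for an arbitrary $\mathcal{M}$, to the presence of an infinite orthogonal equivalent family; the complementary case (no infinite central summand, e.g.\ a direct sum of finite type $I$ factors) has to be handled separately through the resulting symmetric-sequence-space structure.
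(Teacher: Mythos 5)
Your architecture (reduce to a $*$-derivation, show the separating space is trivial, localise witnesses to orthogonal corners, sum them into a single test element $w$, and estimate the corners $e_n\delta(w)e_n$) is close in spirit to the paper's closed-graph argument, and everything up to the estimate $\|e_n\delta(w)e_n\|_{\mathcal{E}}\ge c-2^{-n}$ is sound. But the step you yourself flag as the ``main obstacle'' is a genuine gap, not a quantification exercise: it is \emph{false} in general that an element $z$ of a Banach ideal of $\tau$-compact operators cannot have $\|e_nze_n\|_{\mathcal{E}}$ bounded away from zero on infinitely many mutually orthogonal, mutually equivalent finite projections. Take $\mathcal{M}=\mathcal{B}(H)\overline{\otimes}L_\infty(0,1)$ with $\tau=\mathrm{Tr}\otimes\int$, let $\mathcal{E}=\mathcal{M}\cap S_0(\tau)$ with the operator norm (this is a Banach $\mathcal{M}$-bimodule and an ideal of $\tau$-compact operators), let $e_n=p_n\otimes\mathbf{1}$ with $p_n$ orthogonal rank-one projections, and let $z=\sum_n p_n\otimes\chi_{A_n}$ with $m(A_n)=2^{-n}$. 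Then $z$ is a projection of trace $1$, hence $z\in\mathcal{E}$, yet $e_nze_n=p_n\otimes\chi_{A_n}$ has $\|e_nze_n\|_{\mathcal{E}}=1$ for every $n$, and all partial pinchings have norm $1$. So bounded-below corners yield no contradiction and your argument does not close. (It would close for atomic $\mathcal{M}$ such as $\mathcal{B}(H)$, where $\|e_nze_n\|_{\mathcal{E}}\le\|e_nze_n\|_{\infty}\,\|e_1\|_{\mathcal{E}}\to0$, but not in the generality of the theorem.)

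The paper avoids this entirely by making the corner norms \emph{divergent} rather than merely bounded below: it weights the witnesses by integers $m_n$ with $m_n\|e_n\|_{\mathcal{E}}>2n+1$ while passing to a subsequence so that $\|m_na_ne_n\|_{\mathcal{E}}<2^{-n}$; the test element $c=\sum_n m_ne_na_ne_n$ then lies in $\mathcal{E}$, but $\|e_n\delta(c)e_n\|_{\mathcal{E}}\ge m_n\|e_nae_n\|_{\mathcal{E}}-1-n>n$ for every $n$, contradicting $\delta(c)\in\mathcal{E}$ with no compactness input whatsoever. Your proof can be repaired in exactly this way: replace $w_n$ by $m_nw_n$ with $m_n\to\infty$ and $m_n\|w_n\|_{\mathcal{E}}\le 2^{-n}$, so that $\|e_n\delta(w)e_n\|_{\mathcal{E}}\ge m_n(c-2^{-n})\to\infty$. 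A second, related gap is your case analysis: infinite orthogonal families of mutually \emph{equivalent} projections need not exist below the relevant projection when $\mathcal{M}$ is finite or has abelian summands (a non-atomic abelian algebra is infinite-dimensional but admits no two orthogonal equivalent nonzero projections), so your deferred ``complementary case'' is far larger than direct sums of finite type $\mathrm{I}$ factors and is where Theorem \ref{th1_1} of the paper lives. The paper needs no equivalence at all: from $pap\ge p$ every orthogonal family of nonzero subprojections $e_n\le p$ automatically satisfies $e_nae_n\ge e_n$, so each corner carries a uniformly large piece of the limit element for free, and the only dichotomy required is atom versus no atom below $p$.
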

\begin{proof}
Without loss of generality, we may assume that $\delta$ is a
$*$-derivation (see the end of the preceding section). Since
$(\mathcal{E},\|\cdot\|_\mathcal{E})$ is a Banach space, it is
sufficient to prove that the graph of $\delta$ is closed. Suppose
a contrary. Then there exist a sequence
$\{a_n\}_{n=1}^\infty\subset \mathcal{E}$ and an element $0\neq
a\in \mathcal{E}$ such that $a=a^*$,
$\|a_n\|_\mathcal{E}\rightarrow 0$ and
$\|\delta(a_n)-a\|_\mathcal{E}\rightarrow 0$ as $n\to \infty$.

Let $a=a_+-a_-$ be an orthogonal decomposition of $a$, that is $
 a_+,a_-\in \mathcal{E}$, $a_+,a_-\geqslant 0$, and $a_+a_-=0$. Without loss of generality,
we may assume that $a_+\neq 0$, otherwise we consider the sequence
$\{-a_n\}_{n=1}^\infty$. Since $a\in \mathcal{E}$, there exists a
projection $p\in\mathcal{M}$ such that $pap\geqslant\lambda p$ for
some $\lambda>0$. Replacing $a_n$ with $\frac{a_n}{\lambda}$ we
may assume $pap\geqslant p$. Hence, for some operator $c\in
\mathcal{M}$, we have $p=c^*papc\in\mathcal{E}$.

There are two possible cases: (i) there exists an atom $0\neq
q\in\mathcal{P}(\mathcal{M})$ such that $q\leq p$ and (ii) the
logic $\mathcal{P}(\mathcal{M})$ does not contain atoms $q\neq 0$
such that $q\leq p$. In the case (i), we have $q\in\mathcal{E}$
and $q\leq qaq$. Since $q$ is an atom, it follows $qa_nq=\lambda_n
q$, $\lambda_n\in\mathbb{C}$ and we immediately deduce that
$\lim_{n\rightarrow\infty} \lambda_n =0$ from the assumption
$\|a_n\|_\mathcal{E}\rightarrow 0$. Observing that
$$
\|\delta(qa_nq)-q\delta(a_n)q\|_\mathcal{E}\leq
2\|\delta(q)\|_{\mathcal{M}}\|a_n\|_\mathcal{E}\to 0,\ {\mbox as}\
n\to \infty.
$$
we obtain a contradiction with the assumption $q\neq 0$ as follows
$$q\leq
qaq=\|\cdot\|_{\mathcal{E}}-\lim_{n\rightarrow\infty}
\delta(qa_nq)=\|\cdot\|_{\mathcal{E}}-\lim_{n\rightarrow\infty}
\delta(\lambda_n q)=\delta(q)\lim_{n\rightarrow\infty} \lambda_n
=0.$$

In the case (ii), there exists a pairwise orthogonal sequence
$\{e_n\}_{n=1}^\infty\subset \mathcal{P}(\mathcal{M})$ such that
$0\neq e_n\leq p$, for all $n\ge 1$. Clearly, we have
$\{e_n\}_{n=1}^\infty\subset\mathcal{E}$ and $e_nae_n\geq e_n$ for
any $n\in\mathbb{N}$. Let $\{m_n\}_{n=1}^\infty$ be any sequence
of positive integers such that
$$
m_n>(2n+1)/\|e_n\|_{\mathcal{E}},\quad n\ge 1.
$$
Passing to a subsequence if necessary, we may assume without loss of generality that
$$\|a_n\|_{\mathcal{E}}<m_n^{-1}2^{-n},\
\|\delta(a_n)-a\|_{\mathcal{E}}<m_n^{-1}$$ and that
$$\|a_n\|_{\mathcal{E}}<n m_n^{-1}\|\delta(e_n)\|_{\mathcal{M}}^{-1}$$ whenever $n\ge 1$ is such that $\delta(e_n)\neq 0$. Let us define an element
$$c:=\sum_{n=1}^\infty m_ne_na_ne_n\in\mathcal{E}$$
where the series converges in the norm $\|\cdot\|_{\mathcal{E}}$,
since we have $\|m_ne_na_ne_n\|_{\mathcal{E}}<2^{-n}$. We intend
to obtain a contradiction by showing that the norm
$\|\delta(c)\|_{\mathcal{E}}$ is larger than any positive integer
$n$. Indeed, fixing such $n\ge 1$, we have
$\|\delta(c)\|_{\mathcal{E}}\ge \|e_n\delta(c)e_n\|_{\mathcal{E}}$
(due to the fact that $\mathcal{E}$ is an $\mathcal{M}$-bimodule)
and
\begin{align*}
\|e_n\delta(c)e_n\|_{\mathcal{E}}&=
\|\delta(e_nc)e_n-\delta(e_n)ce_n\|_{\mathcal{E}}=
m_n\|\delta(e_na_ne_n)e_n-\delta(e_n)e_na_ne_n\|_{\mathcal{E}}\\
&= m_n\|e_n\delta(a_ne_n)e_n\|_{\mathcal{E}}=
m_n\|e_n\delta(a_n)e_n+e_na_n\delta(e_n)e_n\|_{\mathcal{E}}\\
&\geq
m_n\|e_n(\delta(a_n)-a)e_n+e_nae_n\|_{\mathcal{E}}-m_n\|e_na_n\delta(e_n)e_n\|_{\mathcal{E}}\\
&\geq
m_n(\|e_nae_n\|_{\mathcal{E}}-\|e_n(\delta(a_n)-a)e_n\|_{\mathcal{E}})-
m_n\|a_n\|_{\mathcal{E}}\|\delta(e_n)\|_{\mathcal{M}}\\ & \geq
m_n(\|e_nae_n\|_{\mathcal{E}}-\|\delta(a_n)-a\|_{\mathcal{E}})-n\\
&
> m_n\|e_nae_n\|_{\mathcal{E}}-1-n\geq
m_n\|e_n\|_{\mathcal{E}}-1-n>n.
\end{align*}
This shows that $\delta$ is a continuous mapping on
$(\mathcal{E},\|\cdot\|_{\mathcal{E}})$.
\end{proof}
\begin{corollary}\label{SS} (i). Let $(E, \|\cdot\|_E)$ be a symmetric space of measurable functions, such that $E\subseteq S_0(0,\infty)\cap L_\infty(0,\infty)$ and let $\mathcal{E}=E(\tau)$. Then any derivation $\delta:\mathcal{E}\to \mathcal{E}$ is continuous with respect to the norm on $\mathcal{E}$ given by $\|x\|_{\mathcal{E}}=\|\mu(x)\|_E$.

(ii). Let $E$ be a symmetric sequence space, $\mathcal{M}=\mathcal{B}(H)$  and $\mathcal{E}$ be a symmetrically-normed ideal of compact operators on $H$ generated by $E$. Then any derivation $\delta:\mathcal{E}\to \mathcal{E}$ is continuous with respect to the norm on $\|\cdot\|_\mathcal{E}$.

\end{corollary}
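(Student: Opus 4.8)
The plan is to obtain both statements as direct specializations of Theorem \ref{th1}: in each case I will verify that the space $\mathcal{E}$ satisfies the two hypotheses of that theorem -- that it is an ideal of $\tau$-compact operators contained in $\mathcal{M}$, and that it carries a Banach $\mathcal{M}$-bimodule structure -- after which the continuity of any derivation $\delta\colon\mathcal{E}\to\mathcal{E}$ is immediate. There is no new analytic content; the work lies entirely in matching the definitions of symmetric operator space (respectively, symmetrically-normed operator ideal) against the hypotheses of Theorem \ref{th1}.

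For part (i), Theorem \ref{ks}(ii) shows that $\mathcal{E}=E(\tau)$, equipped with $\|x\|_{\mathcal{E}}=\|\mu(x)\|_E$, is a symmetric operator space on $\mathcal{M}$, and hence (as recorded in the discussion preceding Definition \ref{symopspace}) a Banach $\mathcal{M}$-bimodule. It then remains only to locate this bimodule inside $\mathcal{M}\cap S_0(\tau)$. The inclusion $E\subseteq L_\infty(0,\infty)$ forces $\mu(x)$ to be bounded for each $x\in E(\tau)$, whence $x\in\mathcal{M}$; being a symmetric operator space contained in $\mathcal{M}$, $E(\tau)$ is then a two-sided ideal of $\mathcal{M}$. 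The inclusion $E\subseteq S_0(0,\infty)$ gives $\lim_{t\to\infty}\mu(t;x)=0$ for each such $x$, i.e. $x\in S_0(\tau)$. Thus $\mathcal{E}$ is an ideal of $\tau$-compact operators in $\mathcal{M}$, and Theorem \ref{th1} applies verbatim.

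For part (ii), with $\mathcal{M}=\mathcal{B}(H)$ and $\tau=Tr$ we have $S_0(\tau)=\mathcal{K}(H)$. Theorem \ref{ks}(i) shows that $\mathcal{E}$, normed by $\|x\|_{\mathcal{E}}=\|s(x)\|_E$, is a symmetrically-normed operator ideal in $B(H)$; the estimate $\|ASB\|_{\mathcal{E}}\leq\|A\|\,\|S\|_{\mathcal{E}}\,\|B\|$ of Definition \ref{opideal} is exactly the bimodule inequality of Definition \ref{bimodule}(i), and completeness of $(\mathcal{E},\|\cdot\|_{\mathcal{E}})$ makes $\mathcal{E}$ a Banach $\mathcal{M}$-bimodule. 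Since $\mathcal{E}$ is by hypothesis an ideal of compact operators, $\mathcal{E}\subseteq\mathcal{K}(H)=S_0(\tau)$, so its elements are $\tau$-compact and the hypotheses of Theorem \ref{th1} hold. A second application of Theorem \ref{th1} completes the proof.

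Since both parts are specializations, I do not anticipate a genuine obstacle. The only point requiring care is the passage, in part (i), from the symmetric property of $E$ to the full two-sided $\mathcal{M}$-module estimate $\|axb\|_{\mathcal{E}}\leq\|a\|_{\mathcal{M}}\|b\|_{\mathcal{M}}\|x\|_{\mathcal{E}}$; if one prefers not to quote the bimodule structure of symmetric operator spaces directly, this follows by combining Definition \ref{symopspace} with the classical submajorization inequality $\mu(axb)\leq\|a\|_{\mathcal{M}}\|b\|_{\mathcal{M}}\mu(x)$ valid for $a,b\in\mathcal{M}$ and $x\in S(\tau)$.
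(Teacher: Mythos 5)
Your proof is correct and takes essentially the same route as the paper: the paper's own proof of this corollary is literally the one-line citation ``Theorem \ref{th1} and Theorem \ref{ks}'', and your argument is a careful (and accurate) expansion of exactly that reduction. The verifications you supply --- that $E\subseteq L_\infty(0,\infty)$ places $E(\tau)$ inside $\mathcal{M}$ as a two-sided ideal, that $E\subseteq S_0(0,\infty)$ places it inside $S_0(\tau)$, and that the symmetric norm together with $\mu(axb)\leq\|a\|_{\mathcal{M}}\|b\|_{\mathcal{M}}\mu(x)$ yields the Banach $\mathcal{M}$-bimodule inequality --- are all sound and are precisely what the paper leaves implicit.
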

\begin{proof} Theorem \ref{th1} and Theorem \ref{ks}.
\end{proof}
In the case when $\mathcal{M}$ is commutative, the result of Corollary \ref{SS}(i) can be further strengthened.
\begin{theorem}\label{th1_1}
Let $\mathcal{M}$ be a commutative von Neumann algebra and let
$(\mathcal{E},\|\cdot\|_{\mathcal{E}})$ be a Banach ideal of
$\tau$-compact operators in $\mathcal{M}$. Any derivation $\delta$
on $\mathcal{E}$ vanishes.
\end{theorem}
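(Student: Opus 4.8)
The plan is to leverage the continuity furnished by Theorem~\ref{th1}, combine it with the elementary fact that a derivation of a commutative algebra annihilates every idempotent, and then defeat the possible failure of order-continuity of $\|\cdot\|_{\mathcal{E}}$ by an inversion trick. Throughout I use that $\mathcal{E}$ is $\ast$-closed and solid, so it suffices to prove $\delta(x)=0$ for self-adjoint $x\in\mathcal{E}$ (a general $x$ is $a+ib$ with self-adjoint $a,b\in\mathcal{E}$).

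The first observation is that $\delta$ vanishes on every projection $p\in\mathcal{E}$: since $\mathcal{M}$ is commutative, $\delta(p)=\delta(p^2)=2p\delta(p)$, and multiplying by $p$ gives $p\delta(p)=0$, whence $\delta(p)=0$. By linearity $\delta$ kills all finite linear combinations of projections of $\mathcal{E}$, and by the continuity from Theorem~\ref{th1} it kills their $\|\cdot\|_{\mathcal{E}}$-closure, which I denote $\mathcal{E}_0$. A routine truncation shows that $\mathcal{E}_0$ contains every \emph{bounded} element of $\mathcal{E}$ whose support projection has finite trace: if $w=q'w\in\mathcal{E}$ with $\|w\|_{\mathcal{M}}<\infty$ and $\tau(q')<\infty$, then its spectral simple functions $w_k$ satisfy $\|w-w_k\|_{\mathcal{E}}=\|(w-w_k)q'\|_{\mathcal{E}}\le\|w-w_k\|_{\mathcal{M}}\|q'\|_{\mathcal{E}}\to0$, so $\delta(w)=0$.

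Arguing by contradiction, suppose $\delta(x_0)\neq0$ for some $x_0=x_0^{*}$, set $y=\delta(x_0)$, and fix $\varepsilon>0$ with $q:=e^{|y|}((\varepsilon,\infty))\neq0$. Since $y\in\mathcal{E}\subseteq S_0(\tau)$ we have $\tau(q)<\infty$, and from $q\le\varepsilon^{-1}|y|$ also $q\in\mathcal{E}$. Replacing $x_0$ by $x_1:=qx_0$ and using $\delta(q)=0$ gives $\delta(x_1)=q\delta(x_0)=qy$, where $|qy|=q|y|\ge\varepsilon q\neq0$; moreover $x_1$ is self-adjoint with $\tau$-finite support. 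For $N\ge1$ put $p_N:=e^{|x_1|}((1/N,\infty))\le q$; these are $\tau$-finite projections in $\mathcal{E}$, and since $x_1-x_1p_N=x_1e^{|x_1|}((0,1/N])$ has uniform norm at most $1/N$ and support under $q$, one obtains $\|x_1-x_1p_N\|_{\mathcal{E}}\le N^{-1}\|q\|_{\mathcal{E}}\to0$. By continuity, $\delta(x_1p_N)\to\delta(x_1)=qy$.

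The decisive step is that each truncation is bounded below on its support, $|x_1p_N|\ge N^{-1}p_N$, so $x_1p_N$ is invertible in the reduced algebra $p_N\mathcal{M}$ with a \emph{bounded} inverse $(x_1p_N)^{-1}$ of support $p_N$; by the previous paragraph $(x_1p_N)^{-1}\in\mathcal{E}_0$ and hence $\delta((x_1p_N)^{-1})=0$. Applying $\delta$ to $p_N=(x_1p_N)(x_1p_N)^{-1}$ and using $\delta(p_N)=0$ gives $\delta(x_1p_N)(x_1p_N)^{-1}=0$; right-multiplying by $x_1p_N$ and noting $\delta(x_1p_N)=\delta(x_1p_N)p_N$ yields $\delta(x_1p_N)=0$ for every $N$, so $qy=0$, a contradiction. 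This completes the argument. The main obstacle, and the reason a naive ``density of simple functions'' proof cannot work, is that $\|\cdot\|_{\mathcal{E}}$ need not be order-continuous (for instance $\mathcal{E}=L_{p,\infty}(\tau)$): an $x_1$ carrying a genuine singular peak cannot be norm-approximated by bounded operators. The inversion trick circumvents this by exploiting that the bounded inverse always lands in the well-behaved part $\mathcal{E}_0$ on which $\delta$ provably vanishes.
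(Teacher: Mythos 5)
Your proof is correct, but it takes a genuinely different route from the paper's. The paper's argument is self-contained and does \emph{not} invoke Theorem~\ref{th1}: after reducing to $a\geq 0$ with $ap=a$ and $\delta(a)p\geq p$ for a projection $p\in\mathcal{E}$, it splits into an atomic and a non-atomic case and, in the latter, constructs a single element $c=\sum_n\{m_na\}e_n$ (using the ``fractional part'' $\{x\}:=x-[x]$, which has the same image under $\delta$ as $x$ because $[x]$ is a finite linear combination of projections) whose image under $\delta$ would have norm exceeding every $n$ --- contradicting $\delta(c)\in\mathcal{E}$. You instead lean on the automatic continuity already established in Theorem~\ref{th1}, which lets you pass from the vanishing of $\delta$ on finite linear combinations of projections of $\mathcal{E}$ to its vanishing on their $\|\cdot\|_{\mathcal{E}}$-closure; this buys you a softer argument that avoids both the atomic/non-atomic dichotomy and the norm-blow-up construction, at the price of not being independent of Theorem~\ref{th1}. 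Two remarks. First, the hypothesis of your truncation lemma should read ``the support projection $q'$ belongs to $\mathcal{E}$'' rather than ``$\tau(q')<\infty$'': a finite-trace projection need not lie in a general Banach ideal of $\tau$-compact operators (e.g.\ a weighted ideal in $L_\infty(0,1)$ with a non-locally-integrable weight), and your displayed estimate already uses $\|q'\|_{\mathcal{E}}$, so the membership $q'\in\mathcal{E}$ is what the proof actually consumes; in your application $q'=p_N\leq q\in\mathcal{E}$, so nothing breaks. Second, the entire $p_N$-truncation and inversion apparatus is redundant: every element of $\mathcal{E}\subseteq\mathcal{M}$ is already bounded, and you have arranged $x_1=x_1q$ with $q\in\mathcal{E}$, so your truncation lemma applies directly to $w=x_1$, $q'=q$ and yields $\delta(x_1)=qy=0$ at once. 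The worry about order-continuity of $\|\cdot\|_{\mathcal{E}}$ is misplaced in this setting, because the approximation of $x_1$ by its spectral simple functions is controlled by $\|x_1-w_k\|_{\mathcal{M}}\,\|q\|_{\mathcal{E}}$, i.e.\ by the uniform norm; the ``singular peak'' phenomenon you describe can only occur for bimodules of unbounded operators, which is not the situation of Theorem~\ref{th1_1}.
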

\begin{proof}
First of all, we observe that $\delta(p)=0$ for any $p\in\mathcal{P}(\mathcal{M})$. Indeed, this easily follows from applying $\delta$ to $p^2$ and $p^3$ and comparing the results.
 Suppose that  $\delta(a)\neq 0$ for some $a\in\mathcal{M}$. Without loss of generality, we may assume $\delta^*=\delta,\ a^*=a$ and so $\delta(a)\in \mathcal{E}\cap \mathcal{M}_h$. Further, again without loss of generality we may assume $\delta(a)_+\neq 0$ and so there exists a projection $0\neq p\in\mathcal{P}(\mathcal{M})$ such that $\delta(a)p\geq p$.
 Obviously, $p\in\mathcal{E}$ and since $\delta(ap)=\delta(a)p\geq p$, we may assume that $ap=a$ (in particular, $\delta(a)=\delta(ap)=\delta(a)p$). Our next step is to show that we may also assume that $a\geq0$. Let $e_-=s(a_-)\leq s(a)\leq p,\ e_+=s(a_+)\leq s(a)\leq p$. We have either $e_+>0$, or else $e_->0$. In the first case, we set $a=ae_+,\ p=pe_+$, and in the second case $a=-ae_-,\ \delta=-\delta,\ p=pe_-$.
 We have arrived to the following setting: $\mathcal{E}\ni a\geq 0,\ ap=a,\ \delta(a)\geq p>0$. There are two possible cases: (i) there exists an atom $0\neq q\in\mathcal{P}(\mathcal{M})$ such that $q\leq p$ and
(ii) the logic $\mathcal{P}(\mathcal{M})$ does not contain atoms
$q\neq 0$ such that $q\leq p$. In the case (i), we have
$q\in\mathcal{E}$ and $aq=\lambda q$, where $\lambda
\in\mathbb{R}$. In this case, we obtain a contradiction as
follows:
$$0=\lambda\delta(q)=\delta(\lambda q)=\delta(aq)=\delta(a)q\geq q>0.$$

In the case (ii), there exists a pairwise orthogonal sequence
$\{e_n\}_{n=1}^\infty\subset \mathcal{P}(\mathcal{M})$ such that
$0\neq e_n\leq p$, for all $n\ge 1$. Obviously,
$e_n\in\mathcal{E}$, $n\ge 1$. Fix $0\leq x\in\mathcal{E}$ and
define
$$[x]=\sum_{0<k\leq \|x\|_{\mathcal{M}}}ke^x[k,k+1),$$ where $e^x$ is the spectral measure of the operator $x$.
Clearly, $0\leq [x]\leq x$ and therefore $[x]\in\mathcal{E}$.
Since, $[x]$ is a finite combination of projections from
$\mathcal{P}(\mathcal{M})$ we have  $\delta([x])=0$. Thus,
$\{x\}:=x-[x]\in\mathcal{E},\ 0\leq\{x\}\leq \mathbf{1}$ and
$\delta(\{x\})=\delta(x)$.

Let $\{m_n\}_{n=1}^\infty$ be any sequence of positive integers
such that $m_n\|e_n\|_{\mathcal{E}}>n$. Let us define an element
$$c=(so)-\sum_{n=1}^\infty \{m_n a\} e_n$$ (the series converges in the strong operator topology and $\|c\|_{\mathcal{M}}\leq 1$). We have $c=cp\in\mathcal{E}$ and therefore
\begin{align*}
\|\delta(c)\|_{\mathcal{E}}=\|\delta(c)\|_{\mathcal{E}}\|e_n\|_{\mathcal{M}}&
\geq\|\delta(c)e_n\|_{\mathcal{E}}=\|\delta(ce_n)\|_{\mathcal{E}}\\
& =
\|\delta(\{m_na\}e_n)\|_{\mathcal{E}}=\|\delta(\{m_na\})e_n\|_{\mathcal{E}}=\|\delta(m_na)e_n\|_{\mathcal{E}}\\
& =m_n\|\delta(a)e_n\|_{\mathcal{E}}\geq
m_n\|e_n\|_{\mathcal{E}}>n
 \end{align*}
for any $n\in\mathbb{N}$. This contradiction proves that $\delta=0$.
\end{proof}

The following theorem is our second main result. It answers
Question \ref{q2} in the affirmative.

\begin{theorem}\label{th41}
Let $\mathcal{E}$  be a symmetrically-normed ideal in
$\mathcal{B}({H})$. Then for any derivation $\delta:
\mathcal{E}\rightarrow \mathcal{E}$ there exists an operator
$d\in\mathcal{B}({H})$, such that
$\delta(\cdot)=\delta_d(\cdot):=[d,\cdot]$. In addition, we have
$\|d\|_{\mathcal{B}({H})}\leq\|\delta\|_{\mathcal{E}\rightarrow
\mathcal{E}}$.
\end{theorem}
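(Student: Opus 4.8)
The plan is to first invoke Theorem \ref{th1} to guarantee that $\delta$ is bounded, so that $C:=\|\delta\|_{\mathcal{E}\to\mathcal{E}}<\infty$, and then to produce the implementing operator $d$ as a weak-$*$ limit of elements implementing $\delta$ on finite-dimensional corners. Since every symmetrically-normed ideal $\mathcal{E}$ consists of compact operators and contains all finite-rank operators, I fix an orthonormal basis $\{\xi_i\}$ of $H$ with associated matrix units $(e_{ij})\subset\mathcal{E}$ and let $(P_F)$ be the increasing net of finite-rank projections $P_F=\sum_{i\in F}e_{ii}$, $F$ finite. As in the proof of Theorem \ref{th1}, writing $\delta=\delta_{\mathrm{Re}}+i\delta_{\mathrm{Im}}$ I may and do reduce to the case of a $*$-derivation, at the cost of replacing the final bound by $\tfrac12(\|\delta_{\mathrm{Re}}\|_{\mathcal{E}\to\mathcal{E}}+\|\delta_{\mathrm{Im}}\|_{\mathcal{E}\to\mathcal{E}})\leq C$; here I use that $x\mapsto x^*$ is an $\mathcal{E}$-isometry, so that $\|\delta_{\mathrm{Re}}\|_{\mathcal{E}\to\mathcal{E}}$ and $\|\delta_{\mathrm{Im}}\|_{\mathcal{E}\to\mathcal{E}}$ are both $\leq C$.

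Assume then that $\delta$ is a $*$-derivation. For each finite $F$, the compression $\delta_F(x):=P_F\delta(x)P_F$ defines a $*$-derivation of the finite-dimensional matrix algebra $M_F:=P_F\mathcal{M}P_F$; indeed, $P_F$-compression respects products of elements of $M_F$ exactly as in the displayed computation of Theorem \ref{th1}. Every derivation of a matrix algebra is inner, and since $\delta_F$ is a $*$-derivation it is implemented by a skew-adjoint element $ih_F$ with $h_F=h_F^*\in M_F$, i.e. $\delta_F=\delta_{ih_F}$ on $M_F$. The crucial point is a uniform norm bound: testing $\delta_F$ on the single rank-one operator $u\in M_F$ defined by $u\zeta=\langle\zeta,\xi_-\rangle\xi_+$, built from unit eigenvectors $\xi_\pm$ of $h_F$ for its largest and smallest eigenvalues $\lambda_\pm$, gives $\delta_F(u)=i(\lambda_+-\lambda_-)u$, whence $\lambda_+-\lambda_-\leq\|\delta_F\|_{\mathcal{E}\to\mathcal{E}}$. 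Since compression does not increase the symmetric norm, $\|\delta_F\|_{\mathcal{E}\to\mathcal{E}}\leq C$, and after subtracting the scalar $\tfrac12(\lambda_++\lambda_-)P_F$ I obtain a self-adjoint implementing element, still denoted $h_F$, with $\|h_F\|_{\mathcal{B}(H)}=\tfrac12(\lambda_+-\lambda_-)\leq\tfrac12 C$.

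With the uniform bound $\|h_F\|\leq\tfrac12 C$ in hand, weak-$*$ compactness of the ball of radius $\tfrac12C$ in $\mathcal{B}(H)$ yields a subnet converging weak-$*$ to a self-adjoint $h$ with $\|h\|_{\mathcal{B}(H)}\leq\tfrac12 C$. For any fixed finite-rank $x$ one has $P_F\delta(x)P_F=\delta_{ih_F}(x)=i(h_Fx-xh_F)$ for all large $F$; letting $F$ run through the subnet, the left-hand side converges to $\delta(x)$ in operator norm (as $\delta(x)$ is compact) while the right-hand side converges to $i(hx-xh)$ in the weak operator topology, so $\delta(x)=[ih,x]$ for every finite-rank $x$. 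Finally, setting $\Delta:=\delta-\delta_{ih}$, which is again a derivation of $\mathcal{E}$ (note $\delta_{ih}$ maps $\mathcal{E}$ into itself since $\mathcal{E}$ is a bimodule) annihilating all finite-rank operators, I compute for arbitrary $x\in\mathcal{E}$ that $e_{ii}\Delta(x)e_{jj}=\Delta(e_{ii}xe_{jj})=0$, because $e_{ii}xe_{jj}$ has rank $\leq1$; as $i,j$ are arbitrary this forces $\Delta(x)=0$, i.e. $\delta=\delta_{ih}$ on all of $\mathcal{E}$. Reassembling the real and imaginary parts produces $d$ with $\delta=\delta_d$ and $\|d\|_{\mathcal{B}(H)}\leq C$, as required.

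The main obstacle is the uniform norm control of the finite-dimensional implementing elements: inner-ness on matrix algebras is automatic, but the naive estimates relating $\|h_F\|$ to the \emph{operator}-norm size of $\delta_F$ degenerate as $\dim F\to\infty$, and it is essential to measure $\delta_F$ in the symmetric norm and to extract the bound from its action on a single extremal rank-one operator, which is exactly what keeps the constant independent of $F$ and ultimately delivers the sharp inequality $\|d\|_{\mathcal{B}(H)}\leq\|\delta\|_{\mathcal{E}\to\mathcal{E}}$.
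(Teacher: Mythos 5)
Your argument is correct, but it follows a genuinely different route from the paper's. The paper's proof is a short direct construction: it fixes a unit vector $\xi_0$ with rank-one projection $p_0=(\xi_0,\cdot)\xi_0\in\mathcal{E}$, defines $d(b\xi_0):=\delta(bp_0)\xi_0$, checks well-definedness from $b\xi_0=0\Rightarrow bp_0=0$, obtains boundedness from the identity $\|bp_0\|_{\mathcal{E}}=\|b\xi_0\|_H\|p_0\|_{\mathcal{E}}$ together with Theorem \ref{th1}, and then verifies $[d,x]b\xi_0=\delta(x)b\xi_0$ directly; no reduction to $*$-derivations and no compactness argument are needed, and the bound $\|d\|_{\mathcal{B}(H)}\leq\|\delta\|$ falls out immediately. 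Your proof instead compresses to finite-dimensional corners, uses innerness of derivations on matrix algebras, extracts the uniform bound $\|h_F\|\leq\tfrac12\|\delta_F\|$ from a rank-one operator linking extremal eigenvectors of $h_F$ (a nice observation, and indeed the crux of making the constants dimension-free), and passes to a weak-$*$ limit; this is a standard approximation-plus-compactness scheme that is more machinery but also more robust, e.g.\ it adapts to hyperfinite settings where no single rank-one projection generates the whole space. Two minor points to tidy: the intermediate claim ``$P_F\delta(x)P_F=[ih_F,x]$ for all large $F$'' holds for $x$ in the linear span of the matrix units $e_{ij}$ (a general finite-rank operator need not lie in any $M_F$), which is all your final step actually uses since $e_{ii}xe_{jj}$ is a scalar multiple of $e_{ij}$; and you should note explicitly that subtracting the scalar $\tfrac12(\lambda_++\lambda_-)P_F$ does not change the derivation implemented on $M_F$ because $P_F$ is the unit of that corner. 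Both proofs deliver the same sharp estimate $\|d\|_{\mathcal{B}(H)}\leq\|\delta\|_{\mathcal{E}\to\mathcal{E}}$.
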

\begin{proof} By Theorem \ref{ks}, every symmetrically-normed ideal $\mathcal{E}$  is defined by a symmetric sequence space $E$. Every such space $E$ is continuously embedded into $l_\infty$ (see e.g. \cite{KPS}), and this immediately implies that the embedding $\mathcal{E}\subset \mathcal{B}(H)$ is continuous. Without loss of generality, we may assume that the embedding constant is equal to $1$.
Fix an arbitrary vector $\xi_0\in{H}$ such that $\|\xi_0\|_H=1$.
Let $p_0:=(\xi_0,.)\xi_0$, that is $p_0$ is an orthogonal
projection onto one-dimensional subspace spanned by $\xi_0$.
Therefore $p_0\in\mathcal{E}$.

Let $a\in\mathcal{E}$ be such that $a\xi_0=0$. Then $ap_0=0$ and
therefore $\delta(ap_0)=0$. In particular, if operators $b_1,
b_2\in \mathcal{E}$ are such that $b_1\xi_0=b_2\xi_0$, then
$(b_1-b_2)\xi_0=0$ and therefore
$\delta(b_1p_0)-\delta(b_2p_0)=\delta((b_1-b_2)p_0)=0$. Let us now
set
$$d(b\xi_0):=\delta(bp_0)\xi_0,\quad b\in \mathcal{E}.$$
The preceding comment shows that $d$ is correctly defined. Since
any element $\eta\in{H}$ may be written as $\eta=b\xi_0$, for some
$b\in\mathcal{E}$ (it is sufficient to set
$b(\cdot):=(\cdot,\xi_0)\eta$), we see that the operator $d$ is a
well defined linear operator on  ${H}$.

Next, observe that $|bp_0|^2=p_0b^*bp_0=(b^*b\xi_0,\xi_0)p_0$, and
therefore $|bp_0|=\|b\xi_0\|_Hp_0$ yielding
$\|bp_0\|_{\mathcal{E}}=
\||bp_0|\|_{\mathcal{E}}=\|\|b\xi_0\|_Hp_0\|_{\mathcal{E}}=
\|b\xi_0\|_H\|p_0\|_{\mathcal{E}}$ for any $b\in \mathcal{E}$. To
verify that $d$ is a bounded operator on $H$, we firstly recall
that by Theorem \ref{th1} we have
$\|\delta\|:=\|\delta\|_{\mathcal{E}\to \mathcal{E} }<\infty$ and
then write
\begin{eqnarray*}
\|d(b\xi_0)\|_H=\|\delta(bp_0)\xi_0\|_H=\|\delta(bp_0)p_0\|_{\mathcal{E}}\|p_0\|_{\mathcal{E}}^{-1}\leq
\|\delta\|\cdot\|bp_0\|_{\mathcal{E}}\|p_0\|_{\mathcal{E}}^{-1}=\|\delta\|\cdot\|b\xi_0\|_H.
\end{eqnarray*}

We conclude $d\in \mathcal{B}(H)$ and
$\|d\|_{\mathcal{B}(H)}\leq\|\delta\|$.

Finally,
$$[d,x](b\xi_0)=dx(b\xi_0)-xd(b\xi_0)=d(xb\xi_0)-xd(b\xi_0)=\delta(xbp_0)\xi_0-x\delta(bp_0)\xi_0=\delta(x)b\xi_0,$$
for any $x,b\in\mathcal{E}$, which means
$\delta(\cdot)=[d,\cdot]$.
\end{proof}
The following theorem is a complement to our first main result. Its proof is very similar to that of Theorem \ref{th1}. We provide an outline of the proof, indicating the differences.

\begin{theorem}\label{th2}Suppose that $\mathbb{E}$ is a Banach bimodule of
$\tau$-compact operators. Let $\delta\colon\mathcal{M}\to
\mathbb{E}$ be a derivation. Then $\delta$ is a continuous mapping
from $(\mathcal{M},\|\cdot\|_\mathcal{M})$ to
$(\mathbb{E},\|\cdot\|_{\mathbb{E}})$.
\end{theorem}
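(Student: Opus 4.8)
The plan is to run the closed-graph argument of Theorem \ref{th1} at the structural level, changing only the two Banach spaces involved and, accordingly, which factor of each product carries the operator norm $\|\cdot\|_{\mathcal{M}}$ and which carries the bimodule norm $\|\cdot\|_{\mathbb{E}}$. Since $(\mathcal{M},\|\cdot\|_{\mathcal{M}})$ and $(\mathbb{E},\|\cdot\|_{\mathbb{E}})$ are both Banach spaces, it suffices to show that the graph of $\delta$ is closed. As in Theorem \ref{th1}, passing to $\delta_{\mathrm{Re}}$ and $\delta_{\mathrm{Im}}$ we may assume $\delta=\delta^{*}$; if the graph fails to be closed, I would extract a sequence $\{a_n\}\subset\mathcal{M}$ with $\|a_n\|_{\mathcal{M}}\to 0$ and $\|\delta(a_n)-a\|_{\mathbb{E}}\to 0$, where (taking self-adjoint parts) we may assume in addition that $0\neq a=a^{*}\in\mathbb{E}$ and $a_n=a_n^{*}$.

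The one genuinely new step is producing the cut-off projection $p$. In Theorem \ref{th1} the limit $a$ lived in the ideal $\mathcal{E}$ and $p$ came for free from the ideal structure; here $a$ lies only in the bimodule $\mathbb{E}\subseteq S_0(\tau)$. Instead I would exploit that $a$ is self-adjoint and $\tau$-compact: writing $a=a_+-a_-$ and assuming $a_+\neq 0$, the spectral projection $p=e^{a}(\lambda,\infty)$ is nonzero and of finite trace for a suitable $\lambda>0$, and $pap\geq\lambda p$. Rescaling the $a_n$ lets me take $pap\geq p$, whereupon $pap\in\mathbb{E}$ by the bimodule property and the identity $p=c^{*}(pap)c$ (with $c\in\mathcal{M}$) gives $p\in\mathbb{E}$. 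The same property shows $q=qp\in\mathbb{E}$ for every subprojection $q\leq p$, which is what makes the later quantities $\|e_n\|_{\mathbb{E}}$ and $\|e_nae_n\|_{\mathbb{E}}$ meaningful; here $\|e_n\|_{\mathbb{E}}\leq\|e_nae_n\|_{\mathbb{E}}$ follows from $e_n\leq e_nae_n$ and solidity of $\|\cdot\|_{\mathbb{E}}$.

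From here the two cases copy those of Theorem \ref{th1}. In case (i), for an atom $q\leq p$ one has $qa_nq=\lambda_n q$ with $|\lambda_n|\leq\|a_n\|_{\mathcal{M}}\to 0$; since $\|\delta(q)a_nq+qa_n\delta(q)\|_{\mathbb{E}}\leq 2\|a_n\|_{\mathcal{M}}\|\delta(q)\|_{\mathbb{E}}\to 0$ and $q\delta(a_n)q\to qaq$, we obtain $qaq=\lim_n\delta(qa_nq)=\lim_n\lambda_n\delta(q)=0$, contradicting $qaq\geq q>0$. In case (ii) I would pick pairwise orthogonal $e_n\leq p$, integers $m_n$ with $m_n\|e_n\|_{\mathbb{E}}$ large, and a subsequence of $\{a_n\}$ controlling $\|a_n\|_{\mathcal{M}}$, $\|\delta(a_n)-a\|_{\mathbb{E}}$ and $\|a_n\|_{\mathcal{M}}\|\delta(e_n)\|_{\mathbb{E}}$, and set $c=\sum_n m_ne_na_ne_n$; this series now converges in $\|\cdot\|_{\mathcal{M}}$ to an element of $\mathcal{M}$. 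The expansion of $\|e_n\delta(c)e_n\|_{\mathbb{E}}$ is identical to that of Theorem \ref{th1}, save that each use of the bimodule inequality now treats the $\delta(\cdot)$-factor as the $\mathbb{E}$-element and the small factor $a_n$ as an $\mathcal{M}$-multiplier; choosing $m_n$ large enough to absorb the (norm-small) lower-order terms, one obtains $\|\delta(c)\|_{\mathbb{E}}\geq\|e_n\delta(c)e_n\|_{\mathbb{E}}>n$ for every $n$, the required contradiction.

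The hard part is not the computation, which is essentially forced, but the single structural substitution described in the second paragraph: where Theorem \ref{th1} produces $p\in\mathcal{E}$ directly from the ideal property, here one must recover $p$ from the $\tau$-compactness of the self-adjoint limit $a$ and then re-enter $\mathbb{E}$ through $p=c^{*}(pap)c$. Once $p$ and its subprojections are located inside $\mathbb{E}$, interchanging the roles of the two norms in the bimodule inequality is routine and the remainder transcribes verbatim.
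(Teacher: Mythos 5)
Your proposal is correct and follows essentially the same route as the paper: closed graph theorem, reduction to a self-adjoint limit $a$ with $a_+\neq 0$, extraction of a finite-trace spectral projection $p\in\mathbb{E}$ with $pap\geq p$ from the $\tau$-compactness of $a$, and the same atom/atomless dichotomy with the same blocked series $c=\sum_n m_ne_na_ne_n$. The only cosmetic difference is that the paper takes this series to converge merely in the strong operator topology (which suffices, since only $c\in\mathcal{M}$, $\|c\|_{\mathcal{M}}\leq 1$ and $e_nce_n=m_ne_na_ne_n$ are used), whereas your claim of $\|\cdot\|_{\mathcal{M}}$-convergence requires additionally arranging $m_n\|a_n\|_{\mathcal{M}}\to 0$ along the subsequence, e.g. $\|a_n\|_{\mathcal{M}}<m_n^{-1}2^{-n}$ as in Theorem \ref{th1}.
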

\begin{proof} Without loss of generality, we may assume that
$\delta$ is a $*$-derivation and, as in the proof of Theorem
\ref{th1},  that the graph of $\delta$ is not closed, in
particular, that there exists a sequence
$\{a_n\}_{n=1}^\infty\subset\mathcal{M}$ such that
$\lim_{n\rightarrow\infty}\|a_n\|_{\mathcal{M}}=0,\
\lim_{n\rightarrow\infty}\|\delta(a_n)-a\|_{\mathbb{E}}=0$ for
some $a=a^*\neq 0, a\in \mathbb{E}$ with $a_+\neq 0$. Since
$a_+\in S_0(\tau)$ there exists a projection $p\in\mathcal{M}$
such that $\tau(p)<\infty$ and $\mathcal{M}\ni pap\geqslant\lambda
p$ for some $\lambda>0$. Replacing $a_n$ with
$\frac{a_n}{\lambda}$ we may assume $pap\geqslant p\in
\mathbb{E}$.

As in the proof of Theorem \ref{th1}  there are two cases (i) and (ii). In the case (i) the proof is a verbatim repetition of the argument from Theorem \ref{th1}.


In the case (ii) we shall use the same sequence
$\{e_n\}_{n=1}^\infty$ as in Theorem \ref{th1}. We have
$\{e_n\}_{n=1}^\infty\subset \mathbb{E}$ and $e_nae_n\geq e_n$ for
any $n\in\mathbb{N}$.  Let $\{m_n\}_{n=1}^\infty$ be any sequence
of positive integers as in Theorem \ref{th1}. Passing to a
subsequence if necessary, we may assume without loss of generality
that
$$\|a_n\|_{\mathcal{M}}<m_n^{-1},\
\|\delta(a_n)-a\|_{\mathbb{E}}<m_n^{-1}$$ and that
$$\|a_n\|_{\mathcal{M}}<n m_n^{-1}\|\delta(e_n)\|_{\mathbb{E}}^{-1}$$ whenever $n\ge 1$ is such that $\delta(e_n)\neq 0$.
Let us define an element
$$c:=(so)-\sum_{n=1}^\infty m_ne_na_ne_n\in\mathcal{M}$$
(the series above is composed of pairwise orthogonal elements whose operator norm does not exceed $1$, and therefore the series converges in the strong operator topology, in particular,
$\|c\|_{\mathcal{M}}\leq 1$). As in Theorem \ref{th1}, we obtain a contradiction by showing that the norm $\|\delta(c)\|_{\mathbb{E}}$ is larger than any positive integer $n$ as follows:
\begin{align*}
\|\delta(c)\|_{\mathbb{E}}&
\geq\|e_n\delta(c)e_n\|_{\mathbb{E}}\\& =
\|\delta(e_nc)e_n-\delta(e_n)ce_n\|_{\mathbb{E}}=m_n\|\delta(e_na_ne_n)e_n-\delta(e_n)e_na_ne_n\|_{\mathbb{E}}\\
&=
m_n\|e_n\delta(a_ne_n)e_n\|_{\mathbb{E}}=m_n\|e_n\delta(a_n)e_n+e_na_n\delta(e_n)e_n\|_{\mathbb{E}}\\&\geq
m_n\|e_n\delta(a_n)e_n\|_{\mathbb{E}}-m_n\|e_na_n\delta(e_n)e_n\|_{\mathbb{E}}\\
& \geq
m_n\|e_n(\delta(a_n)-a)e_n+e_nae_n\|_{\mathbb{E}}-m_n\|a_n\|_{\mathcal{M}}\|\delta(e_n)\|_{\mathbb{E}}\\
& \geq
m_n(\|e_nae_n\|_{\mathbb{E}}-\|e_n(\delta(a_n)-a)e_n\|_{\mathbb{E}})-m_n\|a_n\|_{\mathcal{M}}\|\delta(e_n)\|_{\mathbb{E}}\\
&\geq
m_n(\|e_nae_n\|_{\mathbb{E}}-\|\delta(a_n)-a\|_{\mathbb{E}})-m_n\|a_n\|_{\mathcal{M}}\|\delta(e_n)\|_{\mathbb{E}}\\
&> m_n\|e_nae_n\|_{\mathbb{E}}-1-n\geq
m_n\|e_n\|_{\mathbb{E}}-1-n>n.
\end{align*}
\end{proof}

\begin{corollary}\label{SS2}
 Let $(E, \|\cdot\|_E)$ be a symmetric space of measurable functions, such that $E\subseteq S_0(0,\infty)$. Then any derivation $\delta:\mathcal{M}\to E(\tau)$ is continuous from $(\mathcal{M},\|\cdot\|_\mathcal{M})$ into $(E(\tau),\|\cdot\|_{E(\tau)})$.
\end{corollary}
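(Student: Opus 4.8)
The plan is to reduce the statement to a direct application of Theorem \ref{th2}, which already guarantees continuity of any derivation $\delta\colon\mathcal{M}\to\mathbb{E}$ whenever $\mathbb{E}$ is a Banach bimodule of $\tau$-compact operators. Thus the whole task amounts to checking that $\mathbb{E}:=E(\tau)$ meets these two requirements, after which Theorem \ref{th2} applies verbatim.

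First I would invoke Theorem \ref{ks}(ii): for a symmetric function space $E=E(0,\tau(\mathbf{1}))$ the set $E(\tau)=\{x\in S(\tau):\mu(x)\in E\}$, equipped with $\|x\|_{E(\tau)}=\|\mu(x)\|_E$, is a symmetric operator space on $\mathcal{M}$ in the sense of Definition \ref{symopspace}. In particular $(E(\tau),\|\cdot\|_{E(\tau)})$ is a Banach space, which secures condition (ii) of Definition \ref{bimodule}.

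Next I would verify condition (i) of Definition \ref{bimodule}, namely that $E(\tau)$ is a genuine $\mathcal{M}$-bimodule. For $x\in E(\tau)$ and $u,v\in\mathcal{M}$, set $\lambda:=\|u\|_{\mathcal{B}(H)}\|v\|_{\mathcal{B}(H)}$. The standard monotonicity estimate for generalized singular values under two-sided multiplication gives $\mu(uxv)\leqslant\lambda\mu(x)=\mu(\lambda x)$, and since $\lambda x\in E(\tau)$, the defining property of a symmetric operator space immediately yields $uxv\in E(\tau)$ together with $\|uxv\|_{E(\tau)}\leqslant\|\lambda x\|_{E(\tau)}=\lambda\|x\|_{E(\tau)}$. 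This is exactly the required bimodule inequality.

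It then remains only to see that every element of $E(\tau)$ is $\tau$-compact, and this is where the hypothesis $E\subseteq S_0(0,\infty)$ enters. If $x\in E(\tau)$, then $\mu(x)\in E\subseteq S_0(0,\infty)$, so $\lim_{t\to\infty}\mu(t;x)=0$, which by the definition of $S_0(\tau)$ means $x\in S_0(\tau)$. Hence $E(\tau)\subseteq S_0(\tau)$ is a Banach $\mathcal{M}$-bimodule of $\tau$-compact operators, and Theorem \ref{th2} delivers the asserted continuity of $\delta\colon\mathcal{M}\to E(\tau)$. I do not expect a genuine obstacle here: the only nontrivial point is the bimodule estimate, and even that reduces to the routine monotonicity of $\mu$ under multiplication by elements of $\mathcal{M}$, so the corollary is essentially a packaging of Theorems \ref{ks} and \ref{th2}.
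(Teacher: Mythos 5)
Your proposal is correct and follows exactly the route the paper takes: the paper's own proof is the one-line citation ``Theorem \ref{th2} and Theorem \ref{ks},'' and you have simply spelled out the routine verifications (the bimodule estimate via $\mu(uxv)\leq\|u\|\,\|v\|\,\mu(x)$, completeness from Theorem \ref{ks}, and $E(\tau)\subseteq S_0(\tau)$ from $E\subseteq S_0(0,\infty)$) that the authors leave implicit.
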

\begin{proof} Theorem \ref{th2} and Theorem \ref{ks}.
\end{proof}

The following corollary extends and complements results from
\cite{AAK,BdPS}. It also relates to Question \ref{q3} and complements
results in \cite{BCS1}. We refer for the definition and description of the algebra
$LS(\mathcal{M})$ of all locally measurable operators affiliated to $\mathcal{M}$
used in the proof below to \cite{BdPS} and references therein.

\begin{corollary}\label{th2onetype} Let $\mathcal{M}$ be a type $I$ von Neumann algebra and
let $\mathbb{E}$ be a Banach bimodule of $\tau$-compact operators.
Any derivation $\delta\colon\mathcal{M}\to \mathbb{E}$ is inner.
\end{corollary}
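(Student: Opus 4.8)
The plan is to reduce the statement to the known structure theory for derivations of type $I$ algebras into the large algebra $LS(\mathcal{M})$, and then to push the implementing element down into the small bimodule $\mathbb{E}$.

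First I would record that, by Theorem \ref{th2}, the derivation $\delta$ is bounded from $(\mathcal{M},\|\cdot\|_{\mathcal{M}})$ into $(\mathbb{E},\|\cdot\|_{\mathbb{E}})$; this boundedness is the essential quantitative input later. Since $\mathbb{E}\subseteq S_0(\tau)\subseteq LS(\mathcal{M})$, I may regard $\delta$ as a derivation from $\mathcal{M}$ into $LS(\mathcal{M})$. Because $\mathcal{M}$ is of type $I$, the results of \cite{AAK,BdPS} apply and guarantee that every such derivation is inner in $LS(\mathcal{M})$: there exists $a\in LS(\mathcal{M})$ with $\delta(x)=[a,x]$ for all $x\in\mathcal{M}$. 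The element $a$ is determined only modulo the center $Z(LS(\mathcal{M}))$, since adding a central element does not change $[a,\cdot]$, and this freedom is exactly what I shall exploit.

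The remaining and main task is to choose the central representative so that $a\in\mathbb{E}$. I would first normalize $a$ so that it is $\tau$-compact: since $[a,x]\in S_0(\tau)$ for every $x\in\mathcal{M}$, the image of $a$ commutes with all of $\mathcal{M}$ modulo the $\tau$-compact operators, so by the type $I$ structure (a direct-integral/Calkin-type argument reducing to factors, where the center of the Calkin quotient is trivial) there is a central $z$ with $b:=a-z\in S_0(\tau)$; replacing $a$ by $b$ leaves $\delta=[b,\cdot]$ unchanged. Fixing an abelian system of matrix units $\{e_{ij}\}$ adapted to the homogeneous type $I$ decomposition, the same commutator computation as in the proofs of Theorems \ref{th1} and \ref{th41} shows that the off-diagonal components $e_{ii}\,b\,e_{jj}$ ($i\neq j$) coincide with components of the elements $\delta(e_{kl})\in\mathbb{E}$ and hence lie in $\mathbb{E}$, while the diagonal component is central and has already been absorbed into $z$.

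The hard part is to reassemble these pieces into a single element of $\mathbb{E}$, i.e.\ to prove a norm estimate of the form $\|b\|_{\mathbb{E}}\leq C\,\|\delta\|_{\mathcal{M}\to\mathbb{E}}$, which is the nontrivial converse to the trivial bound $\|[b,\cdot]\|_{\mathcal{M}\to\mathbb{E}}\leq 2\|b\|_{\mathbb{E}}$. My plan here is to approximate $b$ by its compressions $p_nbp_n$ associated with an increasing sequence of finite-trace projections $p_n\uparrow\mathbf{1}$ (available because $b\in S_0(\tau)$), to control each $\|p_nbp_n\|_{\mathbb{E}}$ by $\|\delta\|$ using the commutator identities together with the solidity of $\mathbb{E}$, and then to pass to the limit using the completeness of $\mathbb{E}$. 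This estimate is where the boundedness supplied by Theorem \ref{th2} and the $\mathcal{M}$-bimodule (solidity) structure of $\mathbb{E}$ are indispensable: without the uniform bound $\|\delta\|<\infty$ the compressions need not converge in $\|\cdot\|_{\mathbb{E}}$, and indeed a merely $\tau$-compact $b$ with $[b,\cdot]$ unbounded into $\mathbb{E}$ would fail to lie in $\mathbb{E}$. Once $b\in\mathbb{E}$ is established, $\delta(x)=[b,x]$ exhibits $\delta$ as an inner derivation implemented by an element of $\mathbb{E}$, completing the proof.
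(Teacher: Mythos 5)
Your overall strategy --- lift the derivation to $LS(\mathcal{M})$, invoke the type $I$ structure theory to obtain an implementing element there, and then correct by a central element so that the implementing element lands in $\mathbb{E}$ --- is essentially the route the paper takes (it cites \cite[Theorem 4.8]{BCS2} to extend $\delta$ to a derivation of $LS(\mathcal{M})$, \cite[Theorem 2.1]{AAK} for innerness, and \cite[Corollary 8]{BSWA} for the descent into $\mathbb{E}$). But there is a genuine gap at your very first reduction. It is \emph{not} true that every derivation from a type $I$ algebra $\mathcal{M}$ into $LS(\mathcal{M})$ is inner: already for commutative $\mathcal{M}=L_\infty(0,1)$ one has $LS(\mathcal{M})=S(0,1)$, every inner derivation vanishes, and yet non-trivial derivations $L_\infty(0,1)\to S(0,1)$ exist (the extensions of $\frac{d}{dt}$ discussed in the introduction and in \cite{BCS1}). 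The theorem of \cite{AAK} you want to invoke applies only to derivations of $LS(\mathcal{M})$ that vanish on the center $\mathcal{Z}(\mathcal{M})$. This is exactly where Theorem \ref{th2} is needed, and not merely as a ``quantitative input later'': since any central projection $p$ commutes with $\delta(p)\in S(\tau)$, applying $\delta$ to $p=p^2$ gives $\delta(p)=2p\delta(p)$ and hence $\delta(p)=0$; the norm continuity from Theorem \ref{th2} together with the uniform density of the linear span of central projections in $\mathcal{Z}(\mathcal{M})$ then yields $\delta|_{\mathcal{Z}(\mathcal{M})}=0$, which passes to the extension $\delta'$ on $LS(\mathcal{M})$. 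Without this step the appeal to the type $I$ innerness results is unfounded.

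A second, lesser, issue is your concluding ``hard part''. The estimate $\|b\|_{\mathbb{E}}\leq C\|\delta\|_{\mathcal{M}\to\mathbb{E}}$ for a suitable central translate $b=a-z$ --- equivalently, that $\sup_{\|x\|_{\mathcal{M}}\leq 1}\|[a,x]\|_{\mathbb{E}}$ controls the $\mathbb{E}$-distance from $a$ to the center --- is precisely the content of the commutator estimates of \cite{BSWA} invoked by the paper, and it is a substantive theorem rather than a consequence of ``commutator identities together with solidity.'' Your limiting argument is also shaky: even when $b\in\mathbb{E}$, the compressions $p_nbp_n$ need not converge to $b$ in $\|\cdot\|_{\mathbb{E}}$ unless the norm is order continuous, so completeness of $\mathbb{E}$ alone does not allow you to ``pass to the limit.'' As written, the proposal identifies the right landmarks but leaves the two decisive steps unproved, and the first of them is stated in a form that is actually false.
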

\begin{proof}
Since the set of all linear combinations of projections in
$\mathcal{Z}(\mathcal{M})$ is dense in $\mathcal{Z}(\mathcal{M})$
(with respect to the uniform norm), we infer that from Theorem \ref{th2} that
$\delta|_{\mathcal{Z}(\mathcal{M})}=0$. By \cite[Theorem 4.8]{BCS2},
there exists a derivation $\delta': LS(\mathcal{M})\longrightarrow LS(\mathcal{M})$
such that $\delta'|_{\mathcal{M}}=\delta$. Hence,
$\delta'|_{\mathcal{Z}(\mathcal{M})}=0$ and by \cite[Theorem 2.1]{AAK}
the derivation $\delta'$ is inner. The assertion now follows from \cite[Corollary 8]{BSWA}.
\end{proof}

Since any commutative von Neumann algebra is a type $I$ von Neumann algebra, we obtain

\begin{corollary}\label{bcom}
Suppose that $\mathcal{M}$ be a commutative von Neumann algebra
and $\mathbb{E}$ is a Banach bimodule of $\tau$-compact operators.
Any derivation $\delta\colon\mathcal{M}\to \mathbb{E}$ vanishes.
\end{corollary}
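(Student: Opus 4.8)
The plan is to derive the vanishing of $\delta$ from the innerness already furnished by Corollary \ref{th2onetype}, using nothing more than the fact that a commutator by a commuting element is zero. First I would record the standard structural observation that every commutative von Neumann algebra is of type $I$: if $\mathcal{M}$ is abelian then its identity $\mathbf{1}$ is an abelian projection (indeed $\mathbf{1}\mathcal{M}\mathbf{1}=\mathcal{M}$ is abelian), so $\mathcal{M}$ is of type $I$. Consequently the hypotheses of Corollary \ref{th2onetype} are satisfied, and that corollary applies verbatim to yield an element $a\in\mathbb{E}$ implementing $\delta$, that is $\delta(x)=[a,x]$ for every $x\in\mathcal{M}$.

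The decisive step is then to show that this commutator is identically zero. Since $\mathcal{M}$ is abelian, every operator affiliated with $\mathcal{M}$ commutes with every other such operator; in particular the whole algebra $S(\tau)$ is commutative. As $a\in\mathbb{E}\subseteq S_0(\tau)\subseteq S(\tau)$ and $x\in\mathcal{M}\subseteq S(\tau)$, this gives $[a,x]=ax-xa=0$ for every $x\in\mathcal{M}$, whence $\delta(x)=0$ for all $x\in\mathcal{M}$, i.e. $\delta$ vanishes.

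The argument is short precisely because Corollary \ref{th2onetype} carries the analytic weight; the only point I would take care to spell out is twofold: that the implementing element $a$ genuinely lies in $S(\tau)$ (it does, being a member of $\mathbb{E}$), and that $S(\tau)$ is genuinely commutative when $\mathcal{M}$ is abelian, so that the commutator really does annihilate all of $\mathcal{M}$. I do not anticipate any substantive obstacle beyond making this commutativity explicit.
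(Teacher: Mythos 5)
Your argument is correct and is essentially the paper's own: the paper derives Corollary \ref{bcom} directly from Corollary \ref{th2onetype} via the observation that a commutative von Neumann algebra is of type $I$, the vanishing of the resulting inner derivation being immediate since $S(\tau)$ is commutative when $\mathcal{M}$ is. You have merely made explicit the final commutativity step that the paper leaves tacit.
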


Our last main result in this note should be compared with some results from \cite{KW} (see e.g. \cite[Theorem 14, Corollary 15]{KW}). In that paper, derivations from a von Neumann subalgebra $\mathcal{N}$ of $\mathcal{M}$ into $L_p$-ideals of $\tau$-compact operators in $\mathcal{M}$ were studied. Our present approach allows us to treat derivations $\delta: \mathcal{N}\to E(\tau)$ under an additional assumption of their continuity (Theorem \ref{th3}) and arbitrary derivations $\delta: \mathcal{M}\to E(\tau)$ (Corollary \ref{last}). We believe that our techniques is of independent interest, since it does not depend on special geometrical properties of (noncommutative) $L_p$-spaces underlying the proofs in \cite{KW} and allows us to treat a rather general class of Banach ${\mathcal M}$-bimodules of (unbounded) $\tau$-measurable operators.

Let $E(0,\infty)$ be a symmetric function space. Firstly, we observe that if  ${\mathcal M}$ is a semifinite von Neumann algebra acting  in a separable Hilbert space $H$, then the symmetric space $E(\mathcal{M},\tau)$ is separable if and only if $E(0,\infty)$ is separable (see \cite {Su1, Me} and \cite[Propositions 1.1, 1.2]{Su2}). In this case, the dual space $F:=E(0,\infty)^*$ is naturally identified with a symmetric space on $(0,\infty)$ \cite{KPS} and we have $F(\mathcal{M},\tau)=E(\mathcal{M},\tau)^*$ (see \cite[Proposition 2.8]{Y} and also \cite{DDP1}). Below, $U(\mathcal{M})$ denotes the group of all unitary elements in $\mathcal{M}$.

\begin{theorem}\label{th3}  Let ${\mathcal M}$ be a semifinite von Neumann algebra equipped with a faithful normal semifinite trace $\tau$ acting in a Hilbert space $H$ and let
$E(\tau)$ be a symmetric operator space, which is dual (with respect to the duality given by $(x,y):=\tau(xy)$) to a symmetric space $F(\tau)$
on $\mathcal{M}$. Suppose that one of the following assumptions holds.

(i). The spaces $H$, $E(\tau)$ and $F(\tau)$ are separable Banach spaces;

(ii). The Banach space $E(\tau)$ is reflexive.

Then any continuous derivation $\delta:
\mathcal{N} \longrightarrow E(\tau)$ from an arbitrary
von Neumann subalgebra $\mathcal{N}$ in $\mathcal{M}$ is inner, that is $\delta(x)=[d,x]$ for some $d\in
E(\tau)$. In addition, the element $d$ can be chosen to satisfy $\|d\|_E\leq\|\delta\|:=\|\delta\|_{\mathcal{E}\to \mathcal{E} }$.

\end{theorem}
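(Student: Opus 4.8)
The plan is to realize the implementing element $d$ as a common fixed point of a natural group of affine isometries acting on a $w^*$-compact convex set, in the spirit of the Kadison--Sakai theorem and the Ryll-Nardzewski fixed-point theorem.

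First I would extract the functional-analytic consequences of the hypotheses. Since $E(\tau)=F(\tau)^*$ for the trace pairing $(x,y)=\tau(xy)$, the closed ball of radius $\|\delta\|$ in $E(\tau)$ is $w^*$-compact by Banach--Alaoglu; under assumption (i) it is moreover $w^*$-metrizable (because $F(\tau)$ is separable), while under assumption (ii) the $w^*$- and the weak topologies on $E(\tau)$ coincide. For each unitary $v\in U(\mathcal{N})$ the map $L_v\colon d\mapsto vdv^*$ is a surjective linear isometry of $E(\tau)$, since conjugation by $v$ preserves $\mu$ and hence the symmetric norm $\|\cdot\|_{E(\tau)}$; moreover $L_v$ is $w^*$-continuous, being the adjoint of the isometry $y\mapsto v^*yv$ of $F(\tau)$, as $(vdv^*,y)=\tau(vdv^*y)=\tau(dv^*yv)=(d,v^*yv)$.

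Next I would build the orbit and the affine action. For $u\in U(\mathcal{N})$ put $d_u:=\delta(u)u^*\in E(\tau)$, so that $\|d_u\|_{E(\tau)}\le\|\delta(u)\|_{E(\tau)}\le\|\delta\|$. The derivation identity $\delta(vu)=\delta(v)u+v\delta(u)$ yields $d_{vu}=\delta(v)v^*+v d_u v^*$, which suggests defining the affine maps $\alpha_v(d):=vdv^*+\delta(v)v^*$. Each $\alpha_v$ is then a $w^*$-continuous affine isometry, the assignment $v\mapsto\alpha_v$ is a group homomorphism ($\alpha_{vw}=\alpha_v\alpha_w$), and $\alpha_v(d_u)=d_{vu}$. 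Let $K$ denote the $w^*$-closed convex hull of $\{d_u:u\in U(\mathcal{N})\}$; then $K$ is a $w^*$-compact convex subset of the $\|\delta\|$-ball, and each $\alpha_v$ carries $K$ onto $K$. Now the Ryll-Nardzewski fixed-point theorem supplies a common fixed point $d\in K$ for the isometry group $\{\alpha_v\}_{v\in U(\mathcal{N})}$: in case (ii) this is the classical statement since $K$ is weakly compact, and in case (i) one invokes the version for $w^*$-compact, $w^*$-metrizable convex sets acted on by $w^*$-continuous affine isometries. Rewriting $\alpha_v(d)=d$ as $\delta(v)=dv-vd=[d,v]$ for every $v\in U(\mathcal{N})$ and using $\mathcal{N}=\operatorname{span} U(\mathcal{N})$ together with linearity of both sides gives $\delta(x)=[d,x]$ for all $x\in\mathcal{N}$, while $d\in K$ forces $\|d\|_{E(\tau)}\le\|\delta\|$.

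The main obstacle is the fixed-point step in the non-reflexive separable case (i). The Ryll-Nardzewski theorem is classically phrased for \emph{weakly} compact convex sets, whereas here $K$ is only $w^*$-compact, and its key hypothesis that the action be noncontracting is not automatic in the $w^*$-topology, the norm of $E(\tau)$ being merely $w^*$-lower-semicontinuous. Securing the appropriate $w^*$-version — exploiting the $w^*$-metrizability guaranteed by separability of $F(\tau)$ and the fact that the maps $\alpha_v$ are simultaneously norm-isometric and $w^*$-continuous — is the delicate point; the remaining verifications (the isometry and $w^*$-continuity of $L_v$, the cocycle computation, invariance of $K$, and the passage from unitaries to all of $\mathcal{N}$) are routine.
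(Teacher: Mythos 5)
Your proposal is essentially identical to the paper's proof: the same affine isometric action $T_u(x)=uxu^*+\delta(u)u^*$ of $U(\mathcal{N})$ on $E(\tau)$, the same orbit of $0$ (your $d_u=\delta(u)u^*$ is exactly $T_u(0)$), the same weak${}^*$-compact invariant convex hull inside the $\|\delta\|$-ball, and Ryll-Nardzewski in the reflexive case (ii). The one step you flag as delicate --- a fixed-point theorem valid for the weak${}^*$-compact, non-weakly-compact set in the separable case (i) --- is precisely where the paper invokes the Namioka--Phelps fixed point theorem \cite{NP}, which is tailored to weak${}^*$-compact convex sets under the separability hypotheses you correctly identify as the relevant ones.
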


\begin{proof} We shall present the proof for the special case when $ \mathcal{N}=\mathcal{M}$, the proof of the general case is exactly the same.
Let $x\in E(\tau),\ u\in U(\mathcal{M})$. We set
$$T_u(x):=uxu^*+\delta(u)u^*.$$
Recall that
\begin{equation}\label{invariance}
 \mu(x)=\mu (uxu^*),\quad \forall x\in E(\tau), \quad \forall u\in U(\mathcal{M}).
\end{equation}
Since $\delta(u)\in E(\tau)$ (and hence, obviously, $\delta(u)u^*\in E(\tau)$), we obtain from \eqref{invariance} that $T_u(x)\in E(\tau)$ for every $x\in E(\tau)$. The mapping $T_{u}$ is isometrical for any $u\in U(\mathcal{M})$, since
$\|T_u(x)-T_u(y)\|_{E(\tau)}=\|u(x-y)u^*\|_{E(\tau)}=\|x-y\|_{E(\tau)}$. We shall now prove that the mapping $u\to T_u$ is a group homomorphism sending $U(\mathcal{M})$ into the subgroup of all isometries of the Banach space $E(\tau)$. To this end, we have
$T_{u}T_{v}(x)=T_{u}(vxv^*+\delta(v)v^*)=
u(vxv^*+\delta(v)v^*)u^*+\delta(u)u^*=
(uv)^*x(uv)+u\delta(v)v^*u^*+\delta(u)vv^*u^*=
(uv)^*x(uv)+\delta(uv)(uv)^*=T_{uv}(x)$.  Finally, we note that $T_{u}$ is an affine mapping on  $E(\tau)$. Indeed, for any scalar $\alpha$, we have
$T_{u}(\alpha x+(1-\alpha)y)=u(\alpha
x+(1-\alpha)y)u^*+\delta(u)u^*= u(\alpha
x+(1-\alpha)y)u^*+\alpha\delta(u)u^*+(1-\alpha)\delta(u)u^*=\alpha
T_u(x)+(1-\alpha)T_u(y)$.

In other words, we have just defined the action $T$ of the group $U(\mathcal{M})$ on the Banach space
$E(\tau)$ by affine isometries. Consider the set $T(x):=\{T_u(x):\
u\in U(\mathcal{M})\}$, the orbit of an element  $x\in E$ under the action
$T$. We intend to consider the reduction of the action $T$ on the orbit $T(x)$ and on the weak or weak${}^*$ closure of its convex hull. (Here, we use the abbreviation weak${}^*$ closure to denote
the closure in the $\sigma(E(\tau), F(\tau))$-topology).
It follows immediately from the construction and the definition that each orbit is invariant under the action $T$. Since $T_{u}$ is an affine mapping for every
$u\in U(\mathcal{M})$, the convex hull $\rm{co}(T(x))$ of every orbit $T(x)$ is also invariant under the action $T$. Now, take $x=0$ and observe that
$$\|T_u(0)\|_{E(\tau)}=\|\delta(u)u^*\|_{E(\tau)}\leq\|\delta\|.$$
Thus, the set $T(0)$ and its convex hull $\rm{co}(T(0))$ are contained in $\|\delta\| E_1$, where $E_1$ is the unit ball of $E(\tau)$. Suppose that the assumption (i) holds.
By Banach-Alaoglu theorem, the set $K$, the weak${}^*$ closure of the set $\rm{co}(T(0))$,  is also a bounded weak${}^*$ compact subset of $E(\tau)$.
It is easy to check that $T_u(K)\subset K$ for any $u\in U(\mathcal{M})$. Indeed, to this end we need only to check that $\tau(yu^*x_\alpha u)\to \tau(yu^*xu)$ whenever $\tau(yx_\alpha )\to \tau(yx)$ for $\{x_\alpha\}, x\in E(\tau)$ and any $y\in F(\tau)$. However, the latter is immediate since $uyu^*\in F(\tau)$.
We are now in a position to apply the Namioka-Phelps fixed point theorem (see \cite[Theorem 15]{NP} and \cite[Corollary 10]{NP}) which guarantees that there exists an element $d\in K$ such that
$d=T_u(d)=udu^*+\delta(u)u^*$ for all $u\in U(\mathcal{M})$, in particular, $\delta(u)=du-ud=[d,u]$
for any $u\in U(\mathcal{M})$. Suppose that the assumption (ii) holds. In this case, the weak${}^*$ topology on $E(\tau)$ coincides with the weak topology and we infer the existence of an element $d$ with the same property as above thanks to the Ryll-Nardzewski  fixed point theorem (see e.g. \cite{SZ}). Observe that in this case, we do not need a separability assumption imposed on $E(\tau)$ and $F(\tau)$ in (i) in order to justify the application of the Namioka-Phelps fixed point theorem. Since any element
$x\in\mathcal{M}$ is a linear combination of unitaries,  we conclude
$\delta(x)=[d,x]$ for any $x\in E(\tau)$.
\end{proof}

The following corollary follows immediately from Theorem \ref{th3} and Corollary \ref{SS2}.

\begin{corollary}\label{last}
Let $\mathcal{M}$ be a semifinite von Neumann algebra and
$E(\tau)$ be as in Theorem \ref{th3}. Any derivation $\delta:
\mathcal{M} \longrightarrow E(\tau)$ is inner.
\end{corollary}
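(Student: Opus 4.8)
The plan is to obtain this statement simply by chaining together two results already proved in this paper: the automatic-continuity statement of Corollary \ref{SS2} and the continuous-derivation-is-inner statement of Theorem \ref{th3}, the latter applied with $\mathcal{N}=\mathcal{M}$. Given an \emph{arbitrary} (a priori not continuous) derivation $\delta\colon\mathcal{M}\to E(\tau)$, Theorem \ref{th3} already performs all the substantive work via its fixed-point argument, but it presupposes that $\delta$ is continuous. The entire task therefore reduces to producing the continuity of $\delta$ from Corollary \ref{SS2} and checking that the hypotheses of the two results are mutually compatible.

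The one point that genuinely requires verification is precisely this compatibility. Corollary \ref{SS2} demands that the underlying symmetric function space $E=E(0,\infty)$ satisfy $E\subseteq S_0(0,\infty)$, whereas Theorem \ref{th3} only assumes that $E(\tau)$ is the $\tau$-dual of a symmetric space $F(\tau)$, together with either (i) separability of $H$, $E(\tau)$ and $F(\tau)$, or (ii) reflexivity of $E(\tau)$. I would therefore first show that either of these assumptions forces $E\subseteq S_0(0,\infty)$. In case (i), separability of $E(\tau)$ is equivalent to separability of $E$ by the remark preceding Theorem \ref{th3}, and a separable symmetric space has order continuous norm; in case (ii), a reflexive Banach lattice has order continuous norm, and this property descends to the function-space factor $E$. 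Once the norm of $E$ is order continuous, no $f\in E$ can have $f^*(\infty)>0$: such an $f$ would dominate $f^*(\infty)\,\chi_A$ for some set $A$ of infinite measure, whence $\chi_A\in E$, yet the tails $\chi_{A\cap(n,\infty)}$ decrease to $0$ while each has the same distribution as $\chi_{(0,\infty)}$ and hence the constant norm $\|\chi_A\|_E>0$, contradicting order continuity. Thus $f^*(\infty)=0$ for every $f\in E$, that is, $E\subseteq S_0(0,\infty)$.

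With this containment established the argument closes at once: Corollary \ref{SS2} yields that $\delta$ is continuous from $(\mathcal{M},\|\cdot\|_{\mathcal{M}})$ into $(E(\tau),\|\cdot\|_{E(\tau)})$, and then Theorem \ref{th3} applied to $\mathcal{N}=\mathcal{M}$ produces an element $d\in E(\tau)$ with $\delta(x)=[d,x]$ for all $x$, and with the norm bound $\|d\|_E\le\|\delta\|$. I expect the only real (and still minor) obstacle to be the order-continuity/$\tau$-compactness verification bridging the hypotheses of the two theorems; the genuine mathematical content is entirely carried by Corollary \ref{SS2} and Theorem \ref{th3}, so beyond this bookkeeping the proof is immediate.
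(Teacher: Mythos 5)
Your proposal is correct and coincides with the paper's own proof, which simply states that the corollary follows immediately from Corollary \ref{SS2} and Theorem \ref{th3}. The only difference is that you explicitly verify the compatibility of hypotheses (that separability or reflexivity of $E(\tau)$ forces $E\subseteq S_0(0,\infty)$ via order continuity of the norm), a point the paper leaves implicit; your verification of it is sound.
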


We complete by observing that the assumption (ii) of Theorem \ref{th3} is automatically satisfied when the symmetric function spaces $E(0,\infty)$ is reflexive \cite{DDP1}.

\begin{corollary}\label{Lp}
Let ${\mathcal M}$ be a semifinite von Neumann algebra equipped with a faithful normal semifinite trace $\tau$ and let $1<p<\infty$. Then any
derivation $\delta: \mathcal{M} \longrightarrow L_p(\mathcal{M},\tau)$ is inner.
\end{corollary}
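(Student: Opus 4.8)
The plan is to derive Corollary \ref{Lp} as a direct specialization of Corollary \ref{last}, whose hypotheses are inherited from Theorem \ref{th3}. Concretely, I would fix $1<p<\infty$, set $E(0,\infty)=L_p(0,\infty)$, and take $E(\tau)=L_p(\mathcal{M},\tau)$, which by the identification recorded just after Theorem \ref{ks} is exactly the symmetric operator space $E(\tau)$ with $E=L_p$. The entire task then reduces to verifying that $L_p(\mathcal{M},\tau)$ satisfies the structural requirements imposed on $E(\tau)$ in Theorem \ref{th3}, after which Corollary \ref{last} (equivalently, Theorem \ref{th3} combined with Corollary \ref{SS2}) applies verbatim and yields that every derivation $\delta:\mathcal{M}\to L_p(\mathcal{M},\tau)$ is inner.

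The first thing I would check is the duality structure: Theorem \ref{th3} requires $E(\tau)$ to be the $\tau$-pairing dual of a symmetric space $F(\tau)$ on $\mathcal{M}$. For $1<p<\infty$ this is the classical noncommutative $L_p$-duality: with $F=L_{p'}$ where $1/p+1/p'=1$, one has $L_p(\mathcal{M},\tau)=L_{p'}(\mathcal{M},\tau)^*$ under the bilinear form $(x,y)\mapsto\tau(xy)$. This is standard and can be cited, so no real work is needed here beyond naming the conjugate exponent. The second, and decisive, thing I would verify is that one of the alternative hypotheses (i) or (ii) of Theorem \ref{th3} holds. Here the cleanest route is to invoke hypothesis (ii), namely reflexivity of $E(\tau)=L_p(\mathcal{M},\tau)$, since this sidesteps any separability assumptions on $H$, $E(\tau)$, and $F(\tau)$. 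As the paper itself flags in the sentence preceding the corollary, reflexivity of $E(\tau)$ is guaranteed once the underlying symmetric function space $E(0,\infty)=L_p(0,\infty)$ is reflexive, with a reference to \cite{DDP1}. Since $L_p(0,\infty)$ is reflexive for $1<p<\infty$, the reflexivity of $L_p(\mathcal{M},\tau)$ follows immediately.

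I would therefore organize the proof as a short two-step argument: first, record that for $1<p<\infty$ the function space $L_p(0,\infty)$ is reflexive, whence by \cite{DDP1} the operator space $L_p(\mathcal{M},\tau)$ is reflexive and so hypothesis (ii) of Theorem \ref{th3} is met; second, conclude by Corollary \ref{last} that every derivation $\delta:\mathcal{M}\to L_p(\mathcal{M},\tau)$ is inner. I do not anticipate a genuine obstacle, since all the analytic content has already been absorbed into Theorem \ref{th3} and Corollary \ref{last}; the only point requiring a moment's care is to make explicit that the condition stated in the paper as ``$E(0,\infty)$ is reflexive'' is the correct bridge, i.e. that reflexivity of the commutative function space transfers to the noncommutative operator space via \cite{DDP1}, and that reflexivity of $L_p(\mathcal{M},\tau)$ is precisely hypothesis (ii). This transfer is exactly the content of the concluding remark preceding the corollary, so the corollary is essentially an instantiation of that remark together with Corollary \ref{last}.
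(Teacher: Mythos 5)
Your proposal is correct and follows exactly the route the paper intends: the corollary is stated immediately after the remark that hypothesis (ii) of Theorem \ref{th3} holds whenever $E(0,\infty)$ is reflexive, and for $1<p<\infty$ one takes $E=L_p$, $F=L_{p'}$, and applies Corollary \ref{last}. Nothing is missing.
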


\end{document}